\theoremstyle{plain}
\newtheorem{theorem}{Theorem}[section]
\newtheorem{lemma}[theorem]{Lemma}
\newtheorem{proposition}[theorem]{Proposition}
\newtheorem{corollary}[theorem]{Corollary}
\theoremstyle{definition}
\newtheorem{definition}[theorem]{Definition}
\newtheorem{example}[theorem]{Example}
\numberwithin{equation}{section}
\newcommand{\QQ}{\mathbb{Q}}
\newcommand{\RR}{\mathbb{R}}
\newcommand{\ZZ}{\mathbb{Z}}
\newcommand{\mc}{\mathcal}
\def\IH{\mathit{IH}}
\DeclareMathOperator{\rank}{rank}
\DeclareMathOperator{\Poin}{\mathsf{Poin}}
\DeclareMathOperator{\relint}{relint}
\newcommand{\leqmrot}{\mathbin{\rotatebox[origin=c]{-40}{$\leq$}}}
\newcommand{\leqprot}{\mathbin{\rotatebox[origin=c]{50}{$\leq$}}}
\begin{document}

\title[Conic decomposition]{Conic decomposition of a toric variety and its application to cohomology}

\author{Seonjeong Park}
\address{Department of Mathematics Education, Jeonju University,  303, Cheonjam-ro, Wansan-gu, Jeonju-si, Jeollabuk-do, 55069, Republic of Korea}
\email{seonjeongpark@jj.ac.kr}

\author{Jongbaek Song}
\address{School of Mathematics, KIAS, 
85 Hoegiro Dongdaemun-gu, Seoul 02455, Republic of Korea}
\email{jongbaek@kias.re.kr}

%\date{\today}

%\thanks{}
\subjclass[2020]{14M25, 55N10, 52B05, 52B11}
\keywords{toric variety, singular cohomology, Betti number, $f$-vector, $h$-vector, Poincar\'e polynomial}

\maketitle 
\abstract 
We introduce the notion of a \emph{conic sequence} of a convex polytope. It is a way of building up a polytope starting from a vertex and attaching faces one by one with certain regulations. We apply this to a toric variety to obtain an iterated cofibration structure on it. This allows us to prove several vanishing results in the rational cohomology of a toric variety and to calculate Poincar\'e polynomials for a large class of singular toric varieties. 
\endabstract

\section{Introduction}
Let $P$ be a convex polytope of dimension $n$ and $f(P)\colonequals (f_0, \dots, f_n)$ its face vector, i.e., $f_i$ is the number of $i$-dimensional faces of $P$. In literature, the vector $f(P)$ is often called the \emph{$f$-vector} or the \emph{face vector} of $P$. If $P$ is a simple polytope, one can associate the \emph{$h$-vector} $h(P)=(h_0, \dots, h_n)$ defined by the following equation
\begin{equation}\label{eq_fvec_hvec}
\sum_{k=0}^n f_k (x-1)^k = \sum_{k=0}^n h_k x^k.
\end{equation}

From the geometric point of view, if $P$ is a full dimensional lattice polytope in $M\otimes_\ZZ \RR$ for some $n$-dimensional lattice $M$, one can associate a projective toric variety $X_P$ of complex dimension $n$ whose orbit space with respect to the action of the $n$-dimensional compact torus $T^n$ is combinatorially equivalent to $P$. In particular, if $P$ is a simple lattice polytope, the corresponding toric variety $X_P$ is a rationally smooth projective toric variety, which could be  understood as a \emph{toric orbifold} \cite{DJ,PS, BSS}. In this case, the singular cohomology of $X_P$ with rational coefficients vanishes in odd degrees. Moreover, plugging $t^2$ into $x$ in  the right-hand side of \eqref{eq_fvec_hvec}, we get the Poincar\'e polynomial $\Poin(X_P,t)$ of $X_P$, namely $h_k$ is the rank of the $2k$-th cohomology group. In other words, the Betti numbers of a rationally smooth toric variety are completely determined by its underlying combinatorics. This is one of the key observations in Stanley's proof of the $g$-conjecture, we refer to  \cite{McM, Sta}. 

If we turn our attention to more general lattice polytopes other than simple polytopes, then the associated projective toric varieties are no longer rationally smooth. In general, for a singular projective variety, we often consider the intersection cohomology $\IH^\ast(-)$ because it carries geometric and topological properties of singular strata. Moreover, there are certain similarities between  $\IH^\ast(-)$ and the singular cohomology of a smooth variety, for instance 
Poincar\'e duality or the hard Lefschetz property. Especially for a projective toric variety $X_P$, the Betti numbers of $\IH^\ast(X_P)$ are combinatorial invariants. Indeed, Stanley \cite{Sta2} defines the \emph{toric $h$-polynomial} of $P$, which agrees with \eqref{eq_fvec_hvec} when $P$ is a simple polytope, and he shows that the coefficients in the toric $h$-polynomial give the Betti numbers of $\IH^\ast(X_P)$. 

However, if we stick to the singular cohomology of a toric variety with singularities beyond orbifold singularities, it is somehow more subtle to deal with than the intersection cohomology because it is far from the Poincar\'e duality, as well as its rational cohomology is not concentrated in even degrees in general. For instance, the $6$-dimensional toric variety associated with the $3$-dimensional cross polytope, that is the convex hull of $(\pm 1,0,0), (0,\pm 1, 0)$ and $(0,0,\pm 1)$ in $\ZZ^3 \otimes_\ZZ \RR$, has a nontrivial rational cohomology class in degree~$3$. We refer to \cite[Example 12.3.8]{CLS} whose computation is based on the results in \cite{Fis, Jor}. Moreover, Barthel, et. al. \cite[Example 3.5]{BBFK} provide two $6$-dimensional toric varieties $X$ and $X'$ associated with combinatorially equivalent polytopes such that $\rank H^3(X;\QQ)\neq \rank H^3(X';\QQ)$. This tells us that the Betti numbers are neither  combinatorial invariants in general, nor obtainable by the $h$-vector defined in \eqref{eq_fvec_hvec}.

The aim of this paper is to explore the category of toric varieties other than toric orbifolds whose Betti numbers for the singular cohomology can be calculated by the underlying combinatorics. Our main idea is to consider a topological stratification 
\begin{equation}\label{eq_stratification}
    \{ \text{a point}\} = X_1 \subset X_2 \subset \cdots \subset X_P
\end{equation}
on $X_P$ such that each stratum is a mapping cone, see \eqref{eq_conic_on_X}. We call such a stratification a \emph{conic decomposition} of $X_P$, which is named after the notion of a conic decomposition of a map $f\colon X \to Y$ introduced by Jeffrey A. Strom.
In this case, we consider the map $\{\text{a point}\} \hookrightarrow X_P$, regarding $X_P$ as a pointed space.  A conic decomposition of $X_P$ can be parametrized by a sequence of certain subspaces in $P$, which we call a \emph{conic sequence} of $P$. Beginning with $P$ itself as the initial term, the sequence continues inductively by deleting faces whose geometric realization agrees with a cone on a polytope if it has, and the sequence stops otherwise. We discuss explicit combinatorial and topological definitions of a conic sequence of $P$ in Section \ref{sec_conic_polytope}. 

We emphasize again that each stratum $X_i$ in \eqref{eq_stratification} is a mapping cone of a map $\Phi_i \colon Y_i \to X_{i-1}$ for some topological space $Y_i$. Hence, the topology of $X_i$ is dominated by the topology of $Y_i$ together with the attaching map $\Phi_i$. Accordingly, if a toric variety $X_P$ admits a conic decomposition, then one can study the topology of $X_P$ by analyzing each cofiber sequence 
\begin{equation}\label{eq_cofiber_seq}
Y_i \xrightarrow{\Phi_i} X_{i-1} \to X_i
\end{equation}
successively. 

In Section \ref{sec_conic_decomp_X_P}, we focus on the case where the conic sequence of $P$ is given by iterated deletions of the cones of simple polytopes, where we denote these simple polytopes by $C_i$'s  throughout this paper. This is equivalent to say that $Y_i$ in \eqref{eq_cofiber_seq} is an orbifold $S^1$-fibration over a toric orbifold corresponding to $C_i$ (Lemma \ref{lem_Y_i_for_simple_poly}). A particular example of this setup is the case where each $C_i$ is given by a simplex, which has been  introduced in \cite{BSS} and further studied in subsequent papers \cite{BSS2, BNSS, SaSo2, LMPS}, where the conic sequence of this particular type is referred as a \emph{retraction sequence}.

In Section \ref{sec_cohomology}, we use a conic decomposition of $X_P$ to calculate the cohomology group of $X_P$ with rational coefficients. We first see how the cohomology group of $Y_i$ affects the cohomology of $X_i$. Then we prove several vanishing theorems for the cases discussed in Section~\ref{sec_conic_decomp_X_P}. In particular, we verify in Theorem \ref{thm_poincare_poly} that a certain class of toric varieties including all toric orbifolds enjoys \eqref{eq_fvec_hvec} to obtain their Poincar\'e polynomials.

\subsection*{Acknowledgements}
Some part of this paper is motivated by Jeffrey A. Strom's talk at Fields institute during the Thematic program on Toric Topology and Polyhedral Products in 2020. The second named author is grateful to the organizers of the program and to Jeffrey A. Strom for his nice presentation and communication about the usage of some terminologies. 

Park has been supported by Basic Science Research Program through the National Research Foundation of Korea (NRF) funded by the Ministry of Science and ICT (NRF-2020R1A2C1A01011045).
Song has been supported by Basic Science Research Program through the National Research Foundation of Korea (NRF) funded by the Ministry of Education (NRF-2018R1D1A1B07048480) and a KIAS Individual Grant (MG076101) at Korea Institute for Advanced Study. This research was supported by the Research Grant of Jeonju University in 2021.

\section{Conic sequence of a convex polytope}\label{sec_conic_polytope}
In this section, we introduce the notion of a conic sequence of a convex polytope.
A polytopal complex $\mc{P}$ is a finite collection of polytopes in an Euclidean space such that
\begin{enumerate}
    \item[(i)] the empty polytope is in $\mc{P}$,
    \item[(ii)] if $E \in\mc{P}$, then all the faces of $E$ are also in $\mc{P}$, and
    \item[(iii)] the intersection $E\cap F$ of two polytopes $E, F\in \mc{P}$ is a face both of $E$ and of $F$.
\end{enumerate}
The dimension $\dim \mc{P}$ is the largest dimension of a polytope in $\mc{P}$. The geometric realization of $\mc{P}$ is the point set $|\mc{P}| \colonequals \bigcup_{E\in\mc{P}} E$.
The combinatorial structure of a polytopal complex $\mc{P}$ is captured by its face poset $L(\mc{P}) := (\mc{P},\subseteq)$, that is, the finite set of polytopes in $\mc{P}$, ordered by inclusion. We define two polytopal complexes to be combinatorially equivalent if their face posets are isomorphic as posets. We refer to \cite[Chapter 5]{Zie} for more details.
In what follows, we assume that $\mc{P}$ is of dimension $n$ otherwise stated and we denote by $\ell$ the number of all the vertices of $\mc{P}$.

We now define a sequence $\{\mathcal{P}_i\}_{1\leq i \leq \ell}$ of polytopal subcomplexes $\mathcal{P}_i$ of $\mc{P}$ inductively as follows.  For technical reasons, we construct the sequence with decreasing indices.  We set the initial term $\mc{P}_\ell$ by $\mc{P}$. Given $\mc{P}_{i}$ for $i \leq \ell$, we take a vertex $v_{i}$ of $\mc{P}_{i}$ such that there exists a unique maximal element $E_i$ in the subposet $\{F \in L(\mc{P}_{i}) \mid v_i \subseteq F\}$. We call this vertex $v_i$ a \emph{cone vertex} of $\mc{P}_i$. Then we define the next term $\mc{P}_{i-1}$ by
\begin{equation}\label{eq_conic_seq_next_term}
\mc{P}_{i-1}\colonequals \mc{P}_i - [v_i,E_i],
\end{equation}
where $[v_i, E_i]\colonequals \{F\in \mc{P}_i \mid v_i \subseteq F \subseteq E_i\}$. We call the sequence~$\{\mathcal{P}_i\}_{1\leq i \leq \ell}$ defined as above a \emph{conic sequence} of $\mc{P}$ if 
the sequence ends up with $\mathcal{P}_1=\{\emptyset,v_1\}$ for some vertex $v_1$ of $\mc{P}$. If in particular $\mc{P}$ is the complex associated with a polytope $P$, namely the complex of all faces of $P$, then we simply call the sequence $\{\mathcal{P}_i\}_{1\leq i \leq \ell}$ a conic sequence of a polytope $P$. 

\begin{example} \label{ex_square}
Consider the square $P$ as a $2$-dimensional polytope. Its associated polytopal complex $\mc{P}$ is given by $\{\emptyset,p_1, p_2, p_3, p_4, e_{14}, e_{12}, e_{23}, e_{34}, P\}$, where $p_1, \dots, p_4$ are vertices of $P$ and each $e_{ij}$ is the edge connecting $p_i$ and $p_j$. Recall that the initial term $\mc{P}_4$ of a conic sequence is $\mc{P}$ itself. Now,
we may take $p_1$ as a cone vertex of $\mc{P}_4$ because the subposet $\{F \in {L}(\mc{P}) \mid p_1 \subseteq F\}$ has the unique maximal element $P$. Hence, we have 
\[
\mc{P}_3=\mc{P}-\{p_1,e_{12},e_{14}, P\} = \{\emptyset,p_2, p_3, p_4, e_{23}, e_{34}\}.
\] 
Since the subposet $\{F \in {L}(\mc{P}_3) \mid p_3 \subseteq F\}$ has two maximal elements $e_{23}$ and $e_{34}$, the vertex $p_3$ does not satisfy the condition for a cone vertex. Instead, one can take either $p_2$ or $p_4$ as a cone vertex to construct the next term of a conic sequence. Figure~\ref{fig_square} shows an example of a conic sequence of the polytope $P$, where the cone vertices are highlighted. 
\end{example}

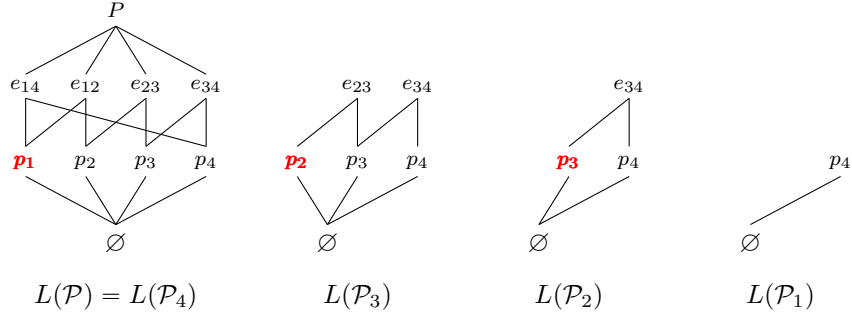
\begin{figure}
\begin{tikzpicture}[scale=0.8]

\begin{scope}%[xshift=100]

\node[above] at (2.5, 2) {\footnotesize$P$};

\draw (2.5,2)--(1,1.2);
\draw (2.5,2)--(2,1.2);
\draw (2.5,2)--(3,1.2);
\draw (2.5,2)--(4,1.2);

\node at (1, 1) {\footnotesize$e_{14}$};
\node at (2, 1) {\footnotesize$e_{12}$};
\node at (3, 1) {\footnotesize$e_{23}$};
\node at (4, 1) {\footnotesize$e_{34}$};

\draw (1,0.8)--(1,0);
\draw (1,0.8)--(4,0);

\draw (2,0.8)--(1,0);
\draw (2,0.8)--(2,0);

\draw (3,0.8)--(2,0);
\draw (3,0.8)--(3,0);

\draw (4,0.8)--(3,0);
\draw (4,0.8)--(4,0);

\node[below, red] at (1, 0) {\footnotesize$\pmb{p_1}$};
\node[below] at (2, 0) {\footnotesize$p_2$};
\node[below] at (3, 0) {\footnotesize$p_3$};
\node[below] at (4, 0) {\footnotesize$p_4$};

\draw (1,-0.5)--(2.5,-1.3);
\draw (2,-0.5)--(2.5,-1.3);
\draw (3,-0.5)--(2.5,-1.3);
\draw (4,-0.5)--(2.5,-1.3);
\node[below] at (2.5,-1.3) {\footnotesize$\emptyset$};

\node at (2.5, -2.5) {${L}(\mathcal{P})={L}(\mathcal{P}_4)$};

\begin{scope}[xshift=100]
\node at (3, 1) {\footnotesize$e_{23}$};
\node at (4, 1) {\footnotesize$e_{34}$};

\draw (3,0.8)--(2,0);
\draw (3,0.8)--(3,0);

\draw (4,0.8)--(3,0);
\draw (4,0.8)--(4,0);

\node[below, red] at (2, 0) {\footnotesize$\pmb{p_2}$};
\node[below] at (3, 0) {\footnotesize$p_3$};
\node[below] at (4, 0) {\footnotesize$p_4$};

\draw (2,-0.5)--(2.5,-1.3);
\draw (3,-0.5)--(2.5,-1.3);
\draw (4,-0.5)--(2.5,-1.3);
\node[below] at (2.5,-1.3) {\footnotesize$\emptyset$};
\node at (3, -2.5) {${L}(\mathcal{P}_3)$};
\end{scope}

\begin{scope}[xshift=200]
\node at (4, 1) {\footnotesize$e_{34}$};

\draw (4,0.8)--(3,0);
\draw (4,0.8)--(4,0);

\node[below,red] at (3, 0) {\footnotesize$\pmb{p_3}$};
\node[below] at (4, 0) {\footnotesize$p_4$};

\draw (3,-0.5)--(2.5,-1.3);
\draw (4,-0.5)--(2.5,-1.3);
\node[below] at (2.5,-1.3) {\footnotesize$\emptyset$};

\node at (3, -2.5) {${L}(\mathcal{P}_2)$};
\end{scope}

\begin{scope}[xshift=300]
\node[below] at (4, 0) {\footnotesize$p_4$};
\draw (4,-0.5)--(2.5,-1.3);
\node[below] at (2.5,-1.3) {\footnotesize$\emptyset$};
\node at (3, -2.5) {${L}(\mathcal{P}_1)$};
\end{scope}
\end{scope}

\end{tikzpicture}
\caption{The face posets associated with a conic sequence of a square.}
\label{fig_square}
\end{figure}

Note that not every convex polytope admits a conic sequence. For example, the octahedron does not admit a conic sequence. In fact, any $n$-gonal bipyramid does not admit a conic sequence for $n\geq 4$.

\begin{definition}\label{def_conic}
A convex polytope $P$ is said to be \emph{conic} if it admits a conic sequence. 
\end{definition}

Since each $\mathcal{P}_i$ is a polytopal subcomplex of $\mathcal{P}_{i+1}$,  a conic sequence $\{\mathcal{P}_i\}_{1\leq i \leq \ell}$ of a polytopal complex $\mc{P}$ associated with a conic polytope $P$ induces an increasing filtration 
\begin{equation}\label{eq_filt_P}
P_1 \hookrightarrow \cdots  \hookrightarrow P_i \hookrightarrow P_{i+1} \hookrightarrow  \cdots \hookrightarrow P_{\ell-1}\hookrightarrow P_\ell=P
\end{equation}
of $P$, where each $P_i$ is the geometric realization $|\mc{P}_i|$. In particular, $P_i-P_{i-1}$ agrees with the union 
\[
\bigcup_{F\in [v_i, E_i]} \relint (F),
\]
where $\relint(F)$ means the relative interior of the face $F$ in $P$. 

Let $C_i$ be the intersection of $E_i$ with an affine hyperplane separating $v_i$ and $P_{i-1}\cap E_i$. Then
we can characterize $P_i$ topologically by the mapping cone of the map
\begin{equation}\label{eq_attaching_map}
\phi_i \colon C_i \to P_{i-1}
\end{equation}
whose image $\phi(x)$ of $x\in C_i$  is defined by the intersection of $P_{i-1}$ and the straight line passing through $x$ and $v_i$. See Figure \ref{fig_attaching_map} for a pictorial description of $\phi_i$ for the polytope given in Example \ref{ex_square}, where we illustrate $\phi_4$ with respect to the first cone vertex~$v_4$. We finally notice that since there is only one vertex of $E_i$ not contained in $P_{i-1}$, the dimension of $E_i$ is at most $\dim P_{i-1}+1$, so we have $\dim C_i \leq \dim P_{i-1}$. 

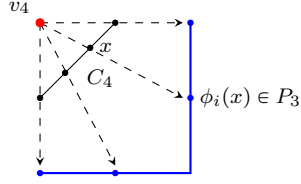
\begin{figure}
\begin{tikzpicture}
\draw[blue,thick] (2,2)--(2,0)--(0,0);

\filldraw[blue] (0,0) circle (1pt);
\filldraw[blue] (2,1) circle (1pt);
\filldraw[blue] (1,0) circle (1pt);
\filldraw[blue] (2,2) circle (1pt);

\draw (0,1)--(1,2);
\filldraw (0,1) circle (1pt);
\filldraw (1,2) circle (1pt);
\filldraw (2/3,5/3) circle (1pt);
\filldraw (1/3,4/3) circle (1pt);

\node[right] at (2/3,5/3) {\footnotesize$x$};
\node[right] at (2,1) {\footnotesize$\phi_i(x)\in P_3$};

\node[below right] at (0.5, 1.5) {\footnotesize$C_4$};
\draw[dashed, -stealth] (0,2)--(1.9,2);
\draw[dashed, -stealth] (0,2)--(1.9,1);
\draw[dashed, -stealth] (0,2)--(1,0.1);
\draw[dashed, -stealth] (0,2)--(0,0.1);

\filldraw[red] (0,2) circle (1.5pt);
\node[above left] at (0,2) {\footnotesize$v_4$};

\end{tikzpicture}
\caption{A pictorial description of $\phi_i$.}
\label{fig_attaching_map}
\end{figure}

In order to keep the information of $C_i$'s for $2\leq i\leq \ell$, we extend the filtration in~\eqref{eq_filt_P} to the following diagram
\begin{equation}\label{eq_conic_decomp}
\begin{tikzcd}[column sep=small]
C_2 \dar{\phi_2} &  & C_{i} \dar{\phi_i}   &C_{i+1} \dar{\phi_{i+1}}  &   & C_{\ell} \dar{\phi_{\ell}}  &  \\
P_1 \rar[hook] & \cdots   \rar[hook]  &  P_{i-1}  \rar[hook]   & P_{i} \rar[hook] & \cdots \rar[hook]  & P_{\ell-1} \rar[hook]  & P_{\ell}=P, 
\end{tikzcd}
\end{equation}
and we summarize the information of \eqref{eq_conic_decomp} as follows:
\begin{enumerate}
\item $P_1$ is a vertex of $P$;
\item each $C_i$ for $2\leq i \leq \ell$ is a convex polytope with $\dim C_i \leq \dim P_{i-1}$; and
\item each $P_{i}$ for $2 \leq i \leq \ell$ is the mapping cone of the map  $\phi_i \colon C_i \to P_{i-1}$ defined in \eqref{eq_attaching_map}.
\end{enumerate}

For convenience, we denote by $\mathscr{C}_{\{\mc{P}_i\}}$ the sequence $\{C_2, \dots, C_{\ell}\}$ 
corresponding to the given conic sequence $\{\mc{P}_i\}_{1\leq i \leq \ell}$ of a polytope $P$. If  $\mathscr{C}_{\{\mc{P}_i\}}$ consists of simplices, we call \eqref{eq_conic_decomp} a \emph{$\Delta$-conic sequence}, which agrees with the definition of a retraction sequence studied in \cite{BSS, BNSS}. A polytope admitting a $\Delta$-conic sequence  is referred to as an \emph{almost simple polytope} in \cite{SaSo2}.  We note that every simple polytope has at least one $\Delta$-conic sequence (see \cite[Proposition 2.3]{BSS}), which is not true for arbitrary convex polytope.

\begin{example}\label{ex_triangular_prism}
A triangular bipyramid $P$ is conic, but not $\Delta$-conic. Indeed, there is a conic sequence $\{\mc{P}_i\}_{1\leq i \leq 5}$ of $P$ such that  $\mathscr{C}_{\{\mc{P}_i\}}=\{\mathrm{pt}, I, I, I^2\}$, see Figure \ref{fig_dual_prism}. 
\end{example}
\begin{figure}
\tdplotsetmaincoords{80}{30}%{rotation around x axis}{rotation around z axis}
\begin{tikzpicture}[tdplot_main_coords, yscale=1]

\filldraw (0,0,-0.8) circle (1pt);
\node  at (0,0,-1.5) {$P_1$};

\begin{scope}[xshift=60]
\filldraw[red] (-1,0,0) circle (1.5pt);
\draw[thick, red] (0,0,-0.8)--(-1,0,0);
\filldraw (0,0,-0.8) circle (1pt);
\node  at (0,0,-1.5) {${P_2\atop C_2=\text{pt}}$};

\end{scope}

\begin{scope}[xshift=120]
\draw[fill=red, opacity=0.3] (0,0,-0.8)--(-1,0,0)--(1,-2,0.2)--cycle;
\draw (0,0,-0.8)--(-1,0,0);
\draw[thick, red] (1,-2,0.2)--(0,0,-0.8);
\draw[thick, red] (1,-2,0.2)--(-1,0,0);
\filldraw (0,0,-0.8) circle (1pt);
\filldraw (-1,0,0) circle (1pt);
\filldraw[red] (1,-2,0.2) circle (1.5pt);
\node  at (0,0,-1.5) {${P_3\atop C_3=I}$};

\end{scope}

\begin{scope}[xshift=180]
\draw[fill=blue!30] (0,0,-0.8)--(-1,0,0)--(1,-2,0.2)--cycle;
\draw (0,0,-0.8)--(-1,0,0)--(1,-2,0.2)--cycle;
\draw (0,0,-0.8)--(-1,0,0);
\draw[fill=red, opacity=0.3] (0,0,0.8)--(-1,0,0)--(1,-2,0.2)--cycle;
\draw[thick, red] (1,-2,0.2)--(0,0,0.8);
\draw[thick, red] (-1,0,0)--(0,0,0.8);
\filldraw (0,0,-0.8) circle (1pt);
\filldraw (-1,0,0) circle (1pt);
\filldraw (1,-2,0.2) circle (1pt);
\filldraw[red] (0,0,0.8) circle (1.5pt);
\node  at (0,0,-1.5) {${P_4\atop C_4=I}$};

\end{scope}

\begin{scope}[xshift=240]
\draw[fill=red, opacity=0.3] (0,0,0.8)--(-1,0,0)--(0,0,-0.8)--(0,1,0)--cycle;
\draw[fill=blue!30] (0,0,0.8)--(-1,0,0)--(0,0,-0.8)--(1,-2,0.2)--cycle;
\draw (-1,0,0)--(1,-2,0.2)--(0,1,0);
\draw (0,0,0.8)--(-1,0,0)--(0,0,-0.8);
\draw (0,0,0.8)--(1,-2,0.2)--(0,0,-0.8);
\draw (0,0,0.8)--(0,1,0)--(0,0,-0.8);

\draw[red, thick, dashed] (0,1,0)--(-1,0,0); 
\draw[thick, red] (0,1,0)--(0,0,0.8);
\draw[thick, red] (0,1,0)--(0,0,-0.8);
\draw[thick, red] (0,1,0)--(1,-2,0.2);

\filldraw (-1,0,0) circle (1pt);
\filldraw[red] (0,1,0) circle (1.5pt);
\filldraw (0,0,0.8) circle (1pt);
\filldraw (0,0,-0.8) circle (1pt);
\filldraw (1,-2,0.2) circle (1pt);
\node  at (0,0,-1.5) {${P_5\atop C_5=I^2}$};

\end{scope}

\end{tikzpicture}
\caption{A conic sequence on a triangular bipyramid.}
\label{fig_dual_prism}
\end{figure}
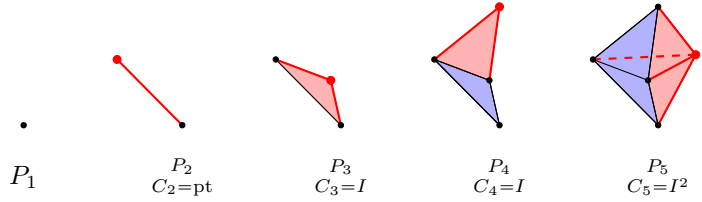

In the following, we see the relationship between the face numbers of $P$ and the face numbers of $C_i\in \mathscr{C}_{\{\mc{P}_i\}}$ for $i=2, \dots, \ell$. Let $f_i(P)$ be the number of $i$-dimensional faces of a polytope $P$, and let $\Phi_P(x)$ be the generating function of the face numbers of $P$, i.e., $\Phi_P(x)\colonequals \sum_{j=0}^{\dim P}f_j(P)x^j$.

\begin{proposition}\label{prop_f_vec}
Let $\mathscr{C}_{\{\mc{P}_i\}}=\{C_2, \dots, C_{\ell}\}$ be the sequence of polytopes corresponding to a conic sequence $\{\mc{P}_i\}_{1\leq i \leq \ell}$ of $P$ as in \eqref{eq_conic_decomp}.  Then  
\begin{equation*}%\label{eq_f_vec}
\Phi_P(x)=1+\sum_{i=2}^{\ell} \left(1+x\Phi_{C_i}(x)\right).
\end{equation*}
\end{proposition}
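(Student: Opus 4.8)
The plan is to exploit the fact that a conic sequence partitions the face poset of $P$. Since $\mc{P}_{i-1} = \mc{P}_i - [v_i, E_i]$ for $2 \le i \le \ell$, each interval $[v_i,E_i]$ is removed in full at exactly one step and never reappears, and the sequence terminates with $\mc{P}_1 = \{\emptyset, v_1\}$. Hence the nonempty faces of $P$ decompose as the disjoint union
\[
\mc{P}\setminus\{\emptyset\} = \{v_1\} \;\sqcup\; \bigsqcup_{i=2}^{\ell} [v_i, E_i].
\]
Because $\Phi_P(x) = \sum_{j=0}^{\dim P} f_j(P)\,x^j = \sum_{F\neq\emptyset} x^{\dim F}$, this partition gives $\Phi_P(x) = 1 + \sum_{i=2}^{\ell} \big( \sum_{F \in [v_i, E_i]} x^{\dim F} \big)$, where the leading $1$ is the contribution of the vertex $v_1$. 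Thus the whole statement reduces to the per-interval identity $\sum_{F \in [v_i, E_i]} x^{\dim F} = 1 + x\,\Phi_{C_i}(x)$.

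Next I would identify $C_i$ as the vertex figure of $v_i$ in $E_i$. By construction $C_i = E_i \cap H$ for an affine hyperplane $H$ separating $v_i$ from the remaining vertices of $E_i$, which is precisely the defining data of a vertex figure. Moreover, since $v_i$ is a cone vertex of $\mc{P}_i$ whose unique maximal face is $E_i$, the interval $[v_i, E_i]$ consists exactly of the faces of $E_i$ that contain $v_i$. The key combinatorial input is the standard description of the vertex figure: intersection with $H$ yields an inclusion-preserving bijection
\[
\{\, F \text{ a face of } E_i : v_i \subseteq F \,\} \longleftrightarrow \{\, G : G \text{ a face of } C_i \,\}, \qquad G = F \cap H,
\]
under which $\dim G = \dim F - 1$. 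In particular $v_i$ corresponds to the empty face of $C_i$ and $E_i$ corresponds to $C_i$ itself.

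With this bijection in hand the per-interval computation is pure bookkeeping. Writing $g_j$ for the number of $j$-dimensional faces of $C_i$, with the convention $g_{-1} = 1$ for the empty face, the bijection says that $[v_i, E_i]$ has exactly $g_{k-1}$ faces of dimension $k$. Therefore
\[
\sum_{F \in [v_i, E_i]} x^{\dim F} = \sum_{k \ge 0} g_{k-1}\, x^{k} = x \sum_{j \ge -1} g_j\, x^{j} = x\big(x^{-1} + \Phi_{C_i}(x)\big) = 1 + x\,\Phi_{C_i}(x),
\]
which is exactly the desired identity; summing over $i$ and adding the $v_1$ term completes the proof.

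I expect the only genuine obstacle to be making the vertex-figure bijection precise, namely verifying that every face of the cross-section $C_i = E_i \cap H$ arises as $F \cap H$ for a \emph{unique} face $F$ of $E_i$ containing $v_i$, and that taking the section lowers dimension by exactly one. This is classical (see Ziegler's treatment of vertex figures in \cite[Chapter 2]{Zie}), so I would invoke it rather than reprove it; the remaining steps are formal manipulations of the face-counting generating functions.
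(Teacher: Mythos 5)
Your proposal is correct and follows essentially the same route as the paper: decompose the nonempty faces of $P$ as the disjoint union of $\{v_1\}$ and the intervals $[v_i,E_i]$ removed at each step, then use the dimension-shifting bijection between faces of $C_i$ (the vertex figure) and faces in $[v_i,E_i]$ to get the per-interval identity $\sum_{F\in[v_i,E_i]} x^{\dim F} = 1 + x\,\Phi_{C_i}(x)$. Your write-up is somewhat more explicit about the vertex-figure bijection, which the paper simply asserts as a one-to-one correspondence, but the argument is the same.
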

\begin{proof}
Applying \eqref{eq_conic_seq_next_term} iteratively, we obtain 
\begin{equation}\label{eq_f_vec_conic}
\Phi_P(x)= 1+\sum_{i=2}^{\ell} \sum_{F\in [v_i, E_i]} x^{\dim F}.
\end{equation}
Since there is a one-to-one correspondence between the $k$-dimensional faces of $C_i$ and the $(k+1)$-dimensional faces of the interval $[v_i, E_i]$ for each $0 \leq k\leq \dim C_i$, we have 
\begin{equation}\label{eq_f_vec_cC_i}
 \sum_{F\in [v_i, E_i]} x^{\dim F} = 1+ \sum_{k=0}^{\dim C_i} f_k(C_i)x^{k+1} = 1+x\Phi_{C_k}(x). 
\end{equation}
Hence the result follows by combining \eqref{eq_f_vec_conic} and \eqref{eq_f_vec_cC_i}. 
\end{proof}

\begin{example}\label{ex_BIP}
Let $P$ be the following Bruhat interval polytope (see \cite{TsWi}):
\[
\mathsf{Q}_{1324, 4231}\colonequals \text{conv}\{(\sigma(1),\sigma(2),\sigma(3),\sigma(4)) \in \RR^4 \mid  \sigma\in \mathfrak{S}_4,~ (1324) \leq \sigma  \leq (4231)\},
\]
where $\leq$ denotes the Bruhat order on the permutation group $\mathfrak{S}_4$ and $(i_1 i_2 i_3 i_4)$ is a permutation written in the one-line notation. See Figure \ref{fig_BIP} for the description of $\mathsf{Q}_{1324, 4231}$. A conic sequence of $\mathsf{Q}_{1324, 4231}$ is illustrated in Figure \ref{fig_conic_decomp_BIP}. In this case, the corresponding sequence  $\mathscr{C}_{\{\mc{P}_i\}}$ and  their generating functions are given by  
\begin{equation*}%\label{eq_cone_sp_for_BIP}
C_i=\begin{cases} \mathrm{pt} & 2\leq i \leq 5; \\
I & 6\leq i \leq 15;\\
I^2 &  i=16,
\end{cases}\quad \text{and} \quad 
\Phi_k(x)= \begin{cases} 1& 2 \leq k \leq 5;\\
2+x & 6 \leq k \leq 15;\\ 
4+4x+x^2 & k=16.
\end{cases}
\end{equation*}
We apply Proposition \ref{prop_f_vec} to get 
\begin{align*}%\label{eq_fvec_of_BIP}
\begin{split}
\sum_{k=0}^3 f_k(\mathsf{Q}_{1324, 4231}) x^k &=1+ 4(1+x) + 10(1+2x+x^2)  + (1+4x +4x^2 + x^3) \\
&= 16+28x+14x^2+x^3. 
\end{split}
\end{align*}
It is easy to see that $\mathsf{Q}_{1324, 4231}$ does not admit a $\Delta$-conic sequence, see \cite[Section 5]{LMPS} for instance. 
\end{example}

\begin{figure}
\begin{tikzpicture}

\draw[blue, thick, dashed] (1,4,2,3)--(1,3,2,4)--(2,3,1,4)--(2,4,1,3);
\draw[blue, thick, dashed] (4,2,1,3)--(3,2,1,4)--(3,1,2,4)--(4,1,2,3);

\node at (2,3,1,4) {\tiny$(2,3,1,4)$};
\node[above] at (2,4,1,3) {\tiny$(2,4,1,3)$};
\node[above] at (1,4,2,3) {\tiny$(1,4,2,3)$};
\node[left] at (1,3,2,4) {\tiny$(1,3,2,4)$};

\node[left] at (1,3,4,2) {\tiny$(1,3,4,2)$};
\node[left] at (1,4,3,2) {\tiny$(1,4,3,2)$};

\node[below] at (3,1,4,2) {\tiny$(3,1,4,2)$};

\node[right] at (4,2,1,3) {\tiny$(4,2,1,3)$};
\node[right] at (4,1,2,3) {\tiny$(4,1,2,3)$};
\node at (3,1,2,4) {\tiny$(3,1,2,4)$};
\node at (3,2,1,4) {\tiny$(3,2,1,4)$};

\node[left] at (1,2,3,4) {\tiny$(1,2,3,4)$};
\node[left] at (1,2,4,3) {\tiny$(1,2,4,3)$};
\node[left] at (2,1,4,3) {\tiny$(2,1,4,3)$};
\node[left] at (2,1,3,4) {\tiny$(2,1,3,4)$};

\node[right] at (4,3,2,1) {\tiny$(4,3,2,1)$};
\node[right] at (3,4,2,1) {\tiny$(3,4,2,1)$};
\node[right] at (3,4,1,2) {\tiny$(3,4,1,2)$};
\node[right] at (4,3,1,2) {\tiny$(4,3,1,2)$};

\draw[blue, thick, dashed] (3,1,4,2)--(3,1,2,4);
\draw[blue, thick, dashed] (1,3,4,2)--(1,3,2,4);
\draw[blue, thick, dashed] (1,3,2,4)--(3,1,2,4);

\draw[blue, thick, dashed] (3,2,1,3)--(2,3,1,4);

\draw[dotted] (1,2,3,4)--(1,2,4,3)--(2,1,4,3)--(2,1,3,4)--cycle;
\draw (1,2,4,3)--(2,1,4,3);

\draw[dotted] (1,2,3,4)--(1,3,2,4);
\draw (1,2,4,3)--(1,3,4,2);
\draw (2,1,4,3)--(3,1,4,2);
\draw[dotted] (2,1,3,4)--(3,1,2,4);

\draw (3,4,1,2)--(3,4,2,1)--(4,3,2,1)--(4,3,1,2)--cycle;
\draw (3,4,1,2)--(2,4,1,3);
\draw (4,3,1,2)--(4,2,1,3);

\filldraw (1,3,2,4) circle (2pt);

\draw[fill=blue!20, blue!20, opacity=0.6] (2,4,1,3)--(1,4,2,3)--(1,4,3,2)--(1,3,4,2)--(3,1,4,2)--(4,1,3,2)--(4,1,2,3)--(4,2,1,3)--cycle;

\draw (4,3,2,1)--(4,2,3,1);
\draw (3,4,2,1)--(2,4,3,1);
\draw[very thick, blue] (2,4,1,3)--(1,4,2,3);
\draw[very thick, blue]  (1,4,2,3)--(1,4,3,2);
\draw[very thick, blue]  (2,3,4,1)--(3,2,4,1);
\draw[very thick, blue]  (4,1,3,2)--(4,1,2,3);
\draw[very thick, blue] (2,4,1,3)--(4,2,1,3);
\draw[very thick, blue]  (2,4,3,1)--(2,3,4,1)--(1,3,4,2)--(1,4,3,2)--cycle;
\draw[very thick, blue]  (4,2,3,1)--(3,2,4,1)--(3,1,4,2)--(4,1,3,2)--cycle;
\draw[very thick, blue]  (4,2,1,3)--(4,1,2,3);
\draw[very thick, blue]  (2,4,1,3)--(2,4,3,1)--(4,2,3,1)--(4,2,1,3);
\draw[very thick, blue]  (1,3,4,2)--(3,1,4,2);

\node[right] at (2,3,4,1) {\tiny$(2,3,4,1)$};
\node[above] at (3,2,4,1) {\tiny$(3,2,4,1)$};
\node[right] at (4,2,3,1) {\tiny$(4,2,3,1)$};
\node[below] at (4,1,3,2) {\tiny$(4,1,3,2)$};
\node [above] at (2,4,3,1) {\tiny$(2,4,3,1)$};

\filldraw (4,2,3,1) circle (2pt);

\end{tikzpicture}
\caption{The Bruhat interval polytope $\mathsf{Q}_{1324, 4231}$.}
\label{fig_BIP}
\end{figure}
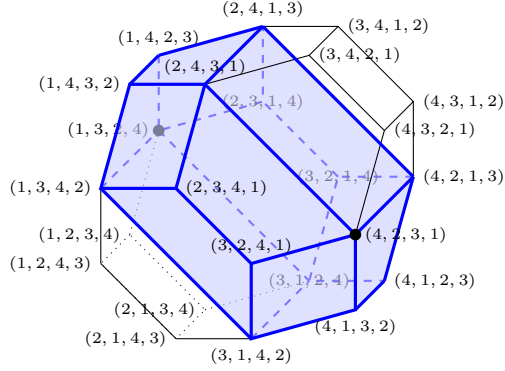

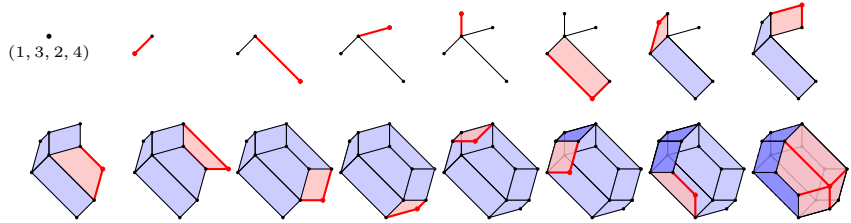
\begin{figure}
\begin{tikzpicture}[scale=0.3]

\filldraw (1,3,2,4) circle (2.5pt);
\node[below] at (1,3,2,4) {\tiny$(1,3,2,4)$};

\begin{scope}[xshift=130]
\draw[thick, red] (1,3,2,4)--(1,3,4,2);
\filldraw (1,3,2,4) circle (1.5pt);
\filldraw[red] (1,3,4,2) circle (2.5pt);
\end{scope}

\begin{scope}[xshift=260]
\draw (1,3,2,4)--(1,3,4,2);
\draw[thick, red] (1,3,2,4)--(3,1,2,4);
\filldraw (1,3,2,4) circle (1.5pt);
\filldraw (1,3,4,2) circle (1.5pt);
\filldraw[red] (3,1,2,4) circle (2.5pt);
\end{scope}

\begin{scope}[xshift=390]
\draw (1,3,2,4)--(1,3,4,2);
\draw (1,3,2,4)--(3,1,2,4);
\draw[thick, red] (1,3,2,4)--(2,3,1,4);

\filldraw (1,3,2,4) circle (1.5pt);
\filldraw (1,3,4,2) circle (1.5pt);
\filldraw (3,1,2,4) circle (1.5pt);
\filldraw[red] (2,3,1,4) circle (2.5pt);
\end{scope}

\begin{scope}[xshift=520]
\draw (1,3,2,4)--(1,3,4,2);
\draw (1,3,2,4)--(3,1,2,4);
\draw (1,3,2,4)--(2,3,1,4);
\draw[thick, red] (1,3,2,4)--(1,4,2,3);
\filldraw (1,3,2,4) circle (1.5pt);
\filldraw (1,3,4,2) circle (1.5pt);
\filldraw (3,1,2,4) circle (1.5pt);
\filldraw (2,3,1,4) circle (1.5pt);
\filldraw[red] (1,4,2,3) circle (2.5pt);
\end{scope}

\begin{scope}[xshift=650]
\draw[fill, red!20] (1,3,4,2)--(3,1,4,2)--(3,1,2,4)--(1,3,2,4)--cycle;
\draw[thick, red] (1,3,4,2)--(3,1,4,2)--(3,1,2,4);

\draw (1,3,2,4)--(1,3,4,2);
\draw (1,3,2,4)--(3,1,2,4);
\draw (1,3,2,4)--(2,3,1,4);
\draw (1,3,2,4)--(1,4,2,3);

\filldraw (1,3,2,4) circle (1.5pt);
\filldraw[red] (3,1,4,2) circle (2.5pt);
\filldraw (1,3,4,2) circle (1.5pt);
\filldraw (3,1,2,4) circle (1.5pt);
\filldraw (2,3,1,4) circle (1.5pt);
\filldraw (1,4,2,3) circle (1.5pt);
\end{scope}

\begin{scope}[xshift=780]
\draw[fill, blue!20] (1,3,4,2)--(3,1,4,2)--(3,1,2,4)--(1,3,2,4)--cycle;
\draw (1,3,4,2)--(3,1,4,2)--(3,1,2,4)--(1,3,2,4)--cycle;

\draw[fill, red!20] (1,4,3,2)--(1,4,2,3)--(1,3,2,4)--(1,3,4,2)--cycle;
\draw[thick, red] (1,4,2,3)--(1,4,3,2)--(1,3,4,2);

\filldraw (1,3,2,4) circle (1.5pt);
\draw (1,3,2,4)--(1,3,4,2);
\draw (1,3,2,4)--(3,1,2,4);
\draw (1,3,2,4)--(2,3,1,4);
\draw (1,3,2,4)--(1,4,2,3);
\filldraw (1,3,4,2) circle (1.5pt);
\filldraw (3,1,2,4) circle (1.5pt);
\filldraw (2,3,1,4) circle (1.5pt);
\filldraw (1,4,2,3) circle (1.5pt);
\filldraw (3,1,4,2) circle (1.5pt);
\filldraw[red] (1,4,3,2) circle (2.5pt);

\end{scope}

\begin{scope}[xshift=910]
\draw[fill, red!20] (1,3,2,4)--(1,4,2,3)--(2,4,1,3)--(2,3,1,4)--cycle;
\draw[thick, red]  (1,4,2,3)--(2,4,1,3)--(2,3,1,4);

\draw[fill, blue!20] (1,3,4,2)--(3,1,4,2)--(3,1,2,4)--(1,3,2,4)--cycle;
\draw (1,3,4,2)--(3,1,4,2)--(3,1,2,4)--(1,3,2,4)--cycle;

\draw[fill, blue!20] (1,4,3,2)--(1,4,2,3)--(1,3,2,4)--(1,3,4,2)--cycle;
\draw (1,4,3,2)--(1,4,2,3)--(1,3,2,4)--(1,3,4,2)--cycle;

\filldraw (1,3,2,4) circle (1.5pt);
\draw (1,3,2,4)--(1,3,4,2);
\draw (1,3,2,4)--(3,1,2,4);
\draw (1,3,2,4)--(2,3,1,4);
\draw (1,3,2,4)--(1,4,2,3);
\filldraw (1,3,4,2) circle (1.5pt);
\filldraw (3,1,2,4) circle (1.5pt);
\filldraw (2,3,1,4) circle (1.5pt);
\filldraw (1,4,2,3) circle (1.5pt);
\filldraw (3,1,4,2) circle (1.5pt);
\filldraw (1,4,3,2) circle (1.5pt);

\filldraw[red] (2,4,1,3) circle (2.5pt);

\end{scope}

\begin{scope}[yshift=-150]
\draw[fill, red!20] (1,3,2,4)--(2,3,1,4)--(3,2,1,4)--(3,1,2,4)--cycle;
\draw[thick, red] (2,3,1,4)--(3,2,1,4)--(3,1,2,4);

\draw[fill, blue!20] (1,3,2,4)--(1,4,2,3)--(2,4,1,3)--(2,3,1,4)--cycle;
\draw (1,4,2,3)--(2,4,1,3)--(2,3,1,4);
\filldraw (2,4,1,3) circle (1.5pt);

\draw[fill, blue!20] (1,3,4,2)--(3,1,4,2)--(3,1,2,4)--(1,3,2,4)--cycle;
\draw (1,3,4,2)--(3,1,4,2)--(3,1,2,4)--(1,3,2,4)--cycle;
\filldraw (3,1,4,2) circle (1.5pt);

\draw[fill, blue!20] (1,4,3,2)--(1,4,2,3)--(1,3,2,4)--(1,3,4,2)--cycle;
\draw (1,4,3,2)--(1,4,2,3)--(1,3,2,4)--(1,3,4,2)--cycle;
\filldraw (1,4,3,2) circle (1.5pt);

\filldraw (1,3,2,4) circle (1.5pt);
\draw (1,3,2,4)--(1,3,4,2);
\draw (1,3,2,4)--(3,1,2,4);
\draw (1,3,2,4)--(2,3,1,4);
\draw (1,3,2,4)--(1,4,2,3);
\filldraw (1,3,4,2) circle (1.5pt);
\filldraw (3,1,2,4) circle (1.5pt);
\filldraw (2,3,1,4) circle (1.5pt);
\filldraw (1,4,2,3) circle (1.5pt);

\filldraw[red] (3,2,1,4) circle (2.5pt);

\end{scope}

\begin{scope}[yshift=-150, xshift=130]
\draw[fill, red!20] (2,4,1,3)--(4,2,1,3)--(3,2,1,4)--(2,3,1,4)--cycle;
\draw[red, thick]  (2,4,1,3)--(4,2,1,3)--(3,2,1,4)--(2,3,1,4);

\draw[fill, blue!20] (1,3,2,4)--(2,3,1,4)--(3,2,1,4)--(3,1,2,4)--cycle;
\draw (2,3,1,4)--(3,2,1,4)--(3,1,2,4);

\draw[fill, blue!20] (1,3,2,4)--(1,4,2,3)--(2,4,1,3)--(2,3,1,4)--cycle;
\draw (1,4,2,3)--(2,4,1,3)--(2,3,1,4);

\draw[fill, blue!20] (1,3,4,2)--(3,1,4,2)--(3,1,2,4)--(1,3,2,4)--cycle;
\draw (1,3,4,2)--(3,1,4,2)--(3,1,2,4)--(1,3,2,4)--cycle;

\draw[fill, blue!20] (1,4,3,2)--(1,4,2,3)--(1,3,2,4)--(1,3,4,2)--cycle;
\draw (1,4,3,2)--(1,4,2,3)--(1,3,2,4)--(1,3,4,2)--cycle;

\filldraw (1,3,2,4) circle (1.5pt);
\draw (1,3,2,4)--(1,3,4,2);
\draw (1,3,2,4)--(3,1,2,4);
\draw (1,3,2,4)--(2,3,1,4);
\draw (1,3,2,4)--(1,4,2,3);
\filldraw (1,3,4,2) circle (1.5pt);
\filldraw (3,1,2,4) circle (1.5pt);
\filldraw (2,3,1,4) circle (1.5pt);
\filldraw (1,4,2,3) circle (1.5pt);
\filldraw (2,4,1,3) circle (1.5pt);
\filldraw (3,1,4,2) circle (1.5pt);
\filldraw (1,4,3,2) circle (1.5pt);
\filldraw[red] (4,2,1,3) circle (2.5pt);

\filldraw (3,2,1,4) circle (1.5pt);

\end{scope}

\begin{scope}[yshift=-150, xshift=260]
\draw[fill, red!20] (3,2,1,4)--(4,2,1,3)--(4,1,2,3)--(3,1,2,4)--cycle;
\draw[red, thick] (4,2,1,3)--(4,1,2,3)--(3,1,2,4);

\draw[fill, blue!20] (1,3,2,4)--(2,3,1,4)--(3,2,1,4)--(3,1,2,4)--cycle;
\draw (2,3,1,4)--(3,2,1,4)--(3,1,2,4);

\draw[fill, blue!20] (1,3,2,4)--(1,4,2,3)--(2,4,1,3)--(2,3,1,4)--cycle;
\draw (1,4,2,3)--(2,4,1,3)--(2,3,1,4);
\filldraw (2,4,1,3) circle (1.5pt);

\draw[fill, blue!20] (1,3,4,2)--(3,1,4,2)--(3,1,2,4)--(1,3,2,4)--cycle;
\draw (1,3,4,2)--(3,1,4,2)--(3,1,2,4)--(1,3,2,4)--cycle;
\filldraw (3,1,4,2) circle (1.5pt);

\draw[fill, blue!20] (1,4,3,2)--(1,4,2,3)--(1,3,2,4)--(1,3,4,2)--cycle;
\draw (1,4,3,2)--(1,4,2,3)--(1,3,2,4)--(1,3,4,2)--cycle;
\filldraw (1,4,3,2) circle (1.5pt);

\draw[fill, blue!20] (2,4,1,3)--(4,2,1,3)--(3,2,1,4)--(2,3,1,4)--cycle;
\draw  (2,4,1,3)--(4,2,1,3)--(3,2,1,4)--(2,3,1,4)--cycle;
\filldraw (4,2,1,3) circle (1.5pt);

\filldraw (1,3,2,4) circle (1.5pt);
\draw (1,3,2,4)--(1,3,4,2);
\draw (1,3,2,4)--(3,1,2,4);
\draw (1,3,2,4)--(2,3,1,4);
\draw (1,3,2,4)--(1,4,2,3);
\filldraw (1,3,4,2) circle (1.5pt);
\filldraw (3,1,2,4) circle (1.5pt);
\filldraw (2,3,1,4) circle (1.5pt);
\filldraw (1,4,2,3) circle (1.5pt);
\filldraw (3,2,1,4) circle (1.5pt);

\filldraw[red] (4,1,2,3) circle (2.5pt);

\end{scope}

\begin{scope}[yshift=-150, xshift=390]
\draw[fill, red!20] (3,1,2,4)--(4,1,2,3)--(4,1,3,2)--(3,1,4,2)--cycle;
\draw[thick, red] (4,1,2,3)--(4,1,3,2)--(3,1,4,2);

\draw[fill, blue!20] (1,3,2,4)--(2,3,1,4)--(3,2,1,4)--(3,1,2,4)--cycle;
\draw (2,3,1,4)--(3,2,1,4)--(3,1,2,4);

\draw[fill, blue!20] (1,3,2,4)--(1,4,2,3)--(2,4,1,3)--(2,3,1,4)--cycle;
\draw (1,4,2,3)--(2,4,1,3)--(2,3,1,4);
\filldraw (2,4,1,3) circle (1.5pt);

\draw[fill, blue!20] (1,3,4,2)--(3,1,4,2)--(3,1,2,4)--(1,3,2,4)--cycle;
\draw (1,3,4,2)--(3,1,4,2)--(3,1,2,4)--(1,3,2,4)--cycle;
\filldraw (3,1,4,2) circle (1.5pt);

\draw[fill, blue!20] (1,4,3,2)--(1,4,2,3)--(1,3,2,4)--(1,3,4,2)--cycle;
\draw (1,4,3,2)--(1,4,2,3)--(1,3,2,4)--(1,3,4,2)--cycle;
\filldraw (1,4,3,2) circle (1.5pt);

\draw[fill, blue!20] (1,4,3,2)--(1,4,2,3)--(1,3,2,4)--(1,3,4,2)--cycle;
\draw (1,4,3,2)--(1,4,2,3)--(1,3,2,4)--(1,3,4,2)--cycle;
\filldraw (1,4,3,2) circle (1.5pt);

\draw[fill, blue!20] (2,4,1,3)--(4,2,1,3)--(3,2,1,4)--(2,3,1,4)--cycle;
\draw  (2,4,1,3)--(4,2,1,3)--(3,2,1,4)--(2,3,1,4)--cycle;
\filldraw (4,2,1,3) circle (1.5pt);

\draw[fill, blue!20] (3,2,1,4)--(4,2,1,3)--(4,1,2,3)--(3,1,2,4)--cycle;
\draw (3,2,1,4)--(4,2,1,3)--(4,1,2,3)--(3,1,2,4)--cycle;
\filldraw (4,1,2,3) circle (1.5pt);

\filldraw (1,3,2,4) circle (1.5pt);
\draw (1,3,2,4)--(1,3,4,2);
\draw (1,3,2,4)--(3,1,2,4);
\draw (1,3,2,4)--(2,3,1,4);
\draw (1,3,2,4)--(1,4,2,3);
\filldraw (1,3,4,2) circle (1.5pt);
\filldraw (3,1,2,4) circle (1.5pt);
\filldraw (2,3,1,4) circle (1.5pt);
\filldraw (1,4,2,3) circle (1.5pt);

\filldraw (3,2,1,4) circle (1.5pt);
\filldraw[red] (4,1,3,2) circle (2.5pt);

\end{scope}

\begin{scope}[yshift=-150, xshift=520]
\draw[fill, blue!20] (1,3,2,4)--(2,3,1,4)--(3,2,1,4)--(3,1,2,4)--cycle;
\draw (2,3,1,4)--(3,2,1,4)--(3,1,2,4);

\draw[fill, blue!20] (1,3,2,4)--(1,4,2,3)--(2,4,1,3)--(2,3,1,4)--cycle;
\draw (1,4,2,3)--(2,4,1,3)--(2,3,1,4);
\draw[fill, blue!20] (1,3,4,2)--(3,1,4,2)--(3,1,2,4)--(1,3,2,4)--cycle;
\draw (1,3,4,2)--(3,1,4,2)--(3,1,2,4)--(1,3,2,4)--cycle;

\draw[fill, blue!20] (1,4,3,2)--(1,4,2,3)--(1,3,2,4)--(1,3,4,2)--cycle;
\draw (1,4,3,2)--(1,4,2,3)--(1,3,2,4)--(1,3,4,2)--cycle;

\draw[fill, blue!20] (1,4,3,2)--(1,4,2,3)--(1,3,2,4)--(1,3,4,2)--cycle;
\draw (1,4,3,2)--(1,4,2,3)--(1,3,2,4)--(1,3,4,2)--cycle;

\draw[fill, blue!20] (2,4,1,3)--(4,2,1,3)--(3,2,1,4)--(2,3,1,4)--cycle;
\draw  (2,4,1,3)--(4,2,1,3)--(3,2,1,4)--(2,3,1,4)--cycle;

\draw[fill, blue!20] (3,2,1,4)--(4,2,1,3)--(4,1,2,3)--(3,1,2,4)--cycle;
\draw (3,2,1,4)--(4,2,1,3)--(4,1,2,3)--(3,1,2,4)--cycle;

\draw[fill, blue!20] (3,1,2,4)--(4,1,2,3)--(4,1,3,2)--(3,1,4,2)--cycle;
\draw (3,1,2,4)--(4,1,2,3)--(4,1,3,2)--(3,1,4,2)--cycle;

\draw (1,3,2,4)--(1,3,4,2);
\draw (1,3,2,4)--(3,1,2,4);
\draw (1,3,2,4)--(2,3,1,4);
\draw (1,3,2,4)--(1,4,2,3);
\filldraw (1,3,2,4) circle (1.5pt);

\draw[fill=pink, opacity=0.8] (1,4,3,2)--(2,4,3,1)--(2,4,1,3)--(1,4,2,3)--cycle;
\draw[red, thick] (1,4,3,2)--(2,4,3,1)--(2,4,1,3);

\filldraw (1,3,4,2) circle (1.5pt);
\filldraw (3,1,2,4) circle (1.5pt);
\filldraw (2,3,1,4) circle (1.5pt);
\filldraw (1,4,2,3) circle (1.5pt);

\filldraw (3,2,1,4) circle (1.5pt);
\filldraw (4,2,1,3) circle (1.5pt);
\filldraw (4,1,2,3) circle (1.5pt);
\filldraw (2,4,1,3) circle (1.5pt);
\filldraw (3,1,4,2) circle (1.5pt);
\filldraw (1,4,3,2) circle (1.5pt);
\filldraw (1,4,3,2) circle (1.5pt);
\filldraw (4,1,3,2) circle (1.5pt);

\filldraw[red] (2,4,3,1) circle (2.5pt);

\end{scope}

\begin{scope}[yshift=-150, xshift=650]
\draw[fill, blue!20] (1,3,2,4)--(2,3,1,4)--(3,2,1,4)--(3,1,2,4)--cycle;
\draw (2,3,1,4)--(3,2,1,4)--(3,1,2,4);

\draw[fill, blue!20] (1,3,2,4)--(1,4,2,3)--(2,4,1,3)--(2,3,1,4)--cycle;
\draw (1,4,2,3)--(2,4,1,3)--(2,3,1,4);

\draw[fill, blue!20] (1,3,4,2)--(3,1,4,2)--(3,1,2,4)--(1,3,2,4)--cycle;
\draw (1,3,4,2)--(3,1,4,2)--(3,1,2,4)--(1,3,2,4)--cycle;

\draw[fill, blue!20] (1,4,3,2)--(1,4,2,3)--(1,3,2,4)--(1,3,4,2)--cycle;
\draw (1,4,3,2)--(1,4,2,3)--(1,3,2,4)--(1,3,4,2)--cycle;

\draw[fill, blue!20] (1,4,3,2)--(1,4,2,3)--(1,3,2,4)--(1,3,4,2)--cycle;
\draw (1,4,3,2)--(1,4,2,3)--(1,3,2,4)--(1,3,4,2)--cycle;

\draw[fill, blue!20] (2,4,1,3)--(4,2,1,3)--(3,2,1,4)--(2,3,1,4)--cycle;
\draw  (2,4,1,3)--(4,2,1,3)--(3,2,1,4)--(2,3,1,4)--cycle;

\draw[fill, blue!20] (3,2,1,4)--(4,2,1,3)--(4,1,2,3)--(3,1,2,4)--cycle;
\draw (3,2,1,4)--(4,2,1,3)--(4,1,2,3)--(3,1,2,4)--cycle;

\draw[fill, blue!20] (3,1,2,4)--(4,1,2,3)--(4,1,3,2)--(3,1,4,2)--cycle;
\draw (3,1,2,4)--(4,1,2,3)--(4,1,3,2)--(3,1,4,2)--cycle;

\draw (1,3,2,4)--(1,3,4,2);
\draw (1,3,2,4)--(3,1,2,4);
\draw (1,3,2,4)--(2,3,1,4);
\draw (1,3,2,4)--(1,4,2,3);
\filldraw (1,3,2,4) circle (1.5pt);

\draw[fill=blue!50, opacity=0.8] (1,4,3,2)--(2,4,3,1)--(2,4,1,3)--(1,4,2,3)--cycle;
\draw (1,4,3,2)--(2,4,3,1)--(2,4,1,3)--(1,4,2,3)--cycle;

\draw[fill=pink, opacity=0.8] (1,4,3,2)--(2,4,3,1)--(2,3,4,1)--(1,3,4,2)--cycle;
\draw[red, thick] (2,4,3,1)--(2,3,4,1)--(1,3,4,2);

\filldraw (1,3,4,2) circle (1.5pt);
\filldraw (3,1,2,4) circle (1.5pt);
\filldraw (2,3,1,4) circle (1.5pt);
\filldraw (1,4,2,3) circle (1.5pt);

\filldraw (3,2,1,4) circle (1.5pt);
\filldraw (4,2,1,3) circle (1.5pt);
\filldraw (4,1,2,3) circle (1.5pt);
\filldraw[red] (2,3,4,1) circle (2.5pt);
\filldraw (2,4,3,1) circle (1.5pt);
\filldraw (4,1,3,2) circle (1.5pt);
\filldraw (4,1,2,3) circle (1.5pt);
\filldraw (4,2,1,3) circle (1.5pt);
\filldraw (1,4,3,2) circle (1.5pt);
\filldraw (1,4,3,2) circle (1.5pt);
\filldraw (2,4,1,3) circle (1.5pt);
\filldraw (3,1,4,2) circle (1.5pt);
\end{scope}

\begin{scope}[yshift=-150, xshift=780]
\draw[fill, blue!20] (1,3,2,4)--(2,3,1,4)--(3,2,1,4)--(3,1,2,4)--cycle;
\draw (2,3,1,4)--(3,2,1,4)--(3,1,2,4);

\draw[fill, blue!20] (1,3,2,4)--(1,4,2,3)--(2,4,1,3)--(2,3,1,4)--cycle;
\draw (1,4,2,3)--(2,4,1,3)--(2,3,1,4);

\draw[fill, blue!20] (1,3,4,2)--(3,1,4,2)--(3,1,2,4)--(1,3,2,4)--cycle;
\draw (1,3,4,2)--(3,1,4,2)--(3,1,2,4)--(1,3,2,4)--cycle;

\draw[fill, blue!20] (1,4,3,2)--(1,4,2,3)--(1,3,2,4)--(1,3,4,2)--cycle;
\draw (1,4,3,2)--(1,4,2,3)--(1,3,2,4)--(1,3,4,2)--cycle;

\draw[fill, blue!20] (1,4,3,2)--(1,4,2,3)--(1,3,2,4)--(1,3,4,2)--cycle;
\draw (1,4,3,2)--(1,4,2,3)--(1,3,2,4)--(1,3,4,2)--cycle;

\draw[fill, blue!20] (2,4,1,3)--(4,2,1,3)--(3,2,1,4)--(2,3,1,4)--cycle;
\draw  (2,4,1,3)--(4,2,1,3)--(3,2,1,4)--(2,3,1,4)--cycle;

\draw[fill, blue!20] (3,2,1,4)--(4,2,1,3)--(4,1,2,3)--(3,1,2,4)--cycle;
\draw (3,2,1,4)--(4,2,1,3)--(4,1,2,3)--(3,1,2,4)--cycle;

\draw[fill, blue!20] (3,1,2,4)--(4,1,2,3)--(4,1,3,2)--(3,1,4,2)--cycle;
\draw (3,1,2,4)--(4,1,2,3)--(4,1,3,2)--(3,1,4,2)--cycle;

\filldraw (1,3,2,4) circle (1.5pt);
\draw (1,3,2,4)--(1,3,4,2);
\draw (1,3,2,4)--(3,1,2,4);
\draw (1,3,2,4)--(2,3,1,4);
\draw (1,3,2,4)--(1,4,2,3);

\draw[fill=blue!50, opacity=0.8] (1,4,3,2)--(2,4,3,1)--(2,4,1,3)--(1,4,2,3)--cycle;
\draw (1,4,3,2)--(2,4,3,1)--(2,4,1,3)--(1,4,2,3)--cycle;

\draw[fill=blue!50, opacity=0.8] (1,4,3,2)--(2,4,3,1)--(2,3,4,1)--(1,3,4,2)--cycle;
\draw (1,4,3,2)--(2,4,3,1)--(2,3,4,1)--(1,3,4,2)--cycle;

\draw[fill=pink, opacity=0.8] (3,1,4,2)--(3,2,4,1)--(2,3,4,1)--(1,3,4,2)--cycle;
\draw[thick, red] (3,1,4,2)--(3,2,4,1)--(2,3,4,1);

\filldraw (1,3,4,2) circle (1.5pt);
\filldraw (3,1,2,4) circle (1.5pt);
\filldraw (2,3,1,4) circle (1.5pt);
\filldraw (1,4,2,3) circle (1.5pt);

\filldraw[red] (3,2,4,1) circle (2.5pt);
\filldraw (2,3,4,1) circle (1.5pt);
\filldraw (2,4,3,1) circle (1.5pt);
\filldraw (4,1,3,2) circle (1.5pt);
\filldraw (4,1,2,3) circle (1.5pt);
\filldraw (4,2,1,3) circle (1.5pt);
\filldraw (1,4,3,2) circle (1.5pt);
\filldraw (1,4,3,2) circle (1.5pt);
\filldraw (3,1,4,2) circle (1.5pt);
\filldraw (2,4,1,3) circle (1.5pt);
\filldraw (3,2,1,4) circle (1.5pt);

\end{scope}

\begin{scope}[yshift=-150, xshift=910]
\draw[fill, blue!20, dotted] (1,3,2,4)--(2,3,1,4)--(3,2,1,4)--(3,1,2,4)--cycle;
\draw[dotted] (2,3,1,4)--(3,2,1,4)--(3,1,2,4);

\draw[fill, blue!20, dotted] (1,3,2,4)--(1,4,2,3)--(2,4,1,3)--(2,3,1,4)--cycle;
\draw[dotted] (1,4,2,3)--(2,4,1,3)--(2,3,1,4);

\draw[fill, blue!20, dotted] (1,3,4,2)--(3,1,4,2)--(3,1,2,4)--(1,3,2,4)--cycle;
\draw[dotted] (1,3,4,2)--(3,1,4,2)--(3,1,2,4)--(1,3,2,4)--cycle;

\draw[fill, blue!20,dotted] (1,4,3,2)--(1,4,2,3)--(1,3,2,4)--(1,3,4,2)--cycle;
\draw[dotted] (1,4,3,2)--(1,4,2,3)--(1,3,2,4)--(1,3,4,2)--cycle;

\draw[fill, blue!20, dotted] (1,4,3,2)--(1,4,2,3)--(1,3,2,4)--(1,3,4,2)--cycle;
\draw[dotted] (1,4,3,2)--(1,4,2,3)--(1,3,2,4)--(1,3,4,2)--cycle;

\draw[fill, blue!20, dotted] (2,4,1,3)--(4,2,1,3)--(3,2,1,4)--(2,3,1,4)--cycle;
\draw (2,4,1,3)--(4,2,1,3)--(3,2,1,4)--(2,3,1,4)--cycle;

\draw[fill, blue!20, dotted] (3,2,1,4)--(4,2,1,3)--(4,1,2,3)--(3,1,2,4)--cycle;
\draw (3,2,1,4)--(4,2,1,3)--(4,1,2,3)--(3,1,2,4)--cycle;

\draw[fill, blue!20, dotted] (3,1,2,4)--(4,1,2,3)--(4,1,3,2)--(3,1,4,2)--cycle;
\draw (3,1,2,4)--(4,1,2,3)--(4,1,3,2)--(3,1,4,2)--cycle;

\draw (1,3,2,4)--(1,3,4,2);
\draw (1,3,2,4)--(3,1,2,4);
\draw (1,3,2,4)--(2,3,1,4);
\draw (1,3,2,4)--(1,4,2,3);

\filldraw (3,2,1,4) circle (1.5pt);
\filldraw (2,3,1,4) circle (1.5pt);
\filldraw (3,1,2,4) circle (1.5pt);
\filldraw (1,3,2,4) circle (1.5pt);

\draw[fill=blue!50, opacity=0.8, dotted] (1,4,3,2)--(2,4,3,1)--(2,4,1,3)--(1,4,2,3)--cycle;
\draw (1,4,3,2)--(2,4,3,1)--(2,4,1,3)--(1,4,2,3)--cycle;

\draw[fill=blue!50, opacity=0.8, dotted] (1,4,3,2)--(2,4,3,1)--(2,3,4,1)--(1,3,4,2)--cycle;
\draw (1,4,3,2)--(2,4,3,1)--(2,3,4,1)--(1,3,4,2)--cycle;

\draw[fill=blue!50, opacity=0.8, dotted] (3,1,4,2)--(3,2,4,1)--(2,3,4,1)--(1,3,4,2)--cycle;
\draw (3,1,4,2)--(3,2,4,1)--(2,3,4,1)--(1,3,4,2)--cycle;

\draw[fill=pink, opacity=0.8] (3,2,4,1)--(3,1,4,2)--(4,1,3,2)--(4,1,2,3)--(4,2,1,3)--(2,4,1,3)--(2,4,3,1)--(2,3,4,1)--cycle;

\draw[red, thick] (4,2,3,1)--(3,2,4,1);
\draw[red, thick] (4,2,3,1)--(2,4,3,1);
\draw[red, thick] (4,2,3,1)--(4,2,1,3);
\draw[red, thick] (4,2,3,1)--(4,1,3,2);

\draw (2,4,1,3)--(4,2,1,3)--(4,1,2,3)--(4,1,3,2)--(3,1,4,2);

\filldraw (1,3,4,2) circle (1.5pt);
\filldraw (1,4,2,3) circle (1.5pt);
\filldraw (1,4,3,2) circle (1.5pt);

\filldraw (2,3,4,1) circle (1.5pt);
\filldraw (2,4,3,1) circle (1.5pt);
\filldraw (2,4,1,3) circle (1.5pt);

\filldraw (3,1,4,2) circle (1.5pt);
\filldraw (3,2,4,1) circle (1.5pt);

\filldraw (4,1,2,3) circle (1.5pt);
\filldraw (4,2,1,3) circle (1.5pt);
\filldraw (4,1,3,2) circle (1.5pt);
\filldraw[red] (4,2,3,1) circle (2.5pt);
\end{scope}

\end{tikzpicture}
\caption{A conic decomposition of $\mathsf{Q}_{1324, 4231}$.} 
\label{fig_conic_decomp_BIP}
\end{figure}

We notice that the generating function $\Phi_{\Delta^{d-1}}(x)$ of $(d-1)$-simplex $\Delta^{d-1}$ is 
$\sum_{k=1}^d {d \choose k}x^{k-1}$, which means 
\[
1+x\Phi_{\Delta^{d-1}}(x)=(x+1)^d.
\]
Hence Proposition \ref{prop_f_vec} implies that if $P$ is $\Delta$-conic, then 
\begin{align}\label{eq_fvec_hvec_rel}
    \sum_{k=0}^{\dim P} f_k(P)x^k = \sum_{k=0}^{\dim P}h_k(x+1)^k,
\end{align}
where $h_0=1$ and  $h_k$ denotes the number of $(k-1)$-simplices in  $\mathscr{C}_{\{\mc{P}_i\}}$ for a $\Delta$-conic sequence of $P$. Moreover, the condition $\dim C_i\leq \dim P_{i-1}$ for a conic sequence implies that $h_k\geq 1$ for all $k=1, \dots, \dim P$. This observation establishes the following corollary which provides a necessary condition for a polytope to be $\Delta$-conic. 
\begin{corollary}\label{cor_nec_cond}
If a convex polytope $P$ admits a $\Delta$-conic sequence, then $h_k$'s in~\eqref{eq_fvec_hvec_rel} are positive for all $k=1, \dots, \dim P$.
\end{corollary}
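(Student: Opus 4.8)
The plan is to read off the coefficients $h_k$ directly from the generating-function identity of Proposition~\ref{prop_f_vec}, and then establish positivity by tracking how the dimension of $P_i$ grows along the filtration \eqref{eq_filt_P}.

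First I would pin down the combinatorial meaning of $h_k$. In the $\Delta$-conic case every $C_i$ is a simplex, and since $1+x\Phi_{\Delta^{d-1}}(x)=(x+1)^d$, the formula of Proposition~\ref{prop_f_vec} collapses to $\sum_k f_k(P)x^k = 1+\sum_{i=2}^{\ell}(x+1)^{\dim C_i+1}$, where the leading constant $1$ coming from the initial vertex $P_1$ is exactly the $k=0$ summand $h_0(x+1)^0$. Grouping the remaining summands by their common exponent shows that $h_k$ equals the number of indices $i$ with $\dim C_i = k-1$, i.e.\ the number of $(k-1)$-simplices occurring in $\mathscr{C}_{\{\mc{P}_i\}}$, matching \eqref{eq_fvec_hvec_rel}. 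In particular each $h_k$ is a non-negative integer, so the whole statement reduces to exhibiting, for every $k\in\{1,\dots,\dim P\}$, at least one $C_i$ that is a $(k-1)$-simplex.

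The second and main step is a dimension count along the filtration. Since $\mc{P}_{i-1}$ is a subcomplex of $\mc{P}_i$, the function $i\mapsto\dim P_i$ is non-decreasing; it begins at $\dim P_1=0$ and terminates at $\dim P_\ell=\dim P=n$. Passing from $\mc{P}_{i-1}$ to $\mc{P}_i$ adjoins precisely the faces of the interval $[v_i,E_i]$, whose top-dimensional member is $E_i$ with $\dim E_i=\dim C_i+1$, so $\dim P_i=\max(\dim P_{i-1},\,\dim C_i+1)$. The defining constraint $\dim C_i\le \dim P_{i-1}$ then forces $\dim P_i\le \dim P_{i-1}+1$, meaning the dimension can rise by at most one per step. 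A non-decreasing integer sequence climbing from $0$ to $n$ in unit increments must attain every intermediate value, so for each $d\in\{1,\dots,n\}$ there is a least index $i$ with $\dim P_i=d$ and $\dim P_{i-1}=d-1$; at that index the increase is caused by the newly added top face $E_i$, which forces $\dim E_i=d$ and hence $\dim C_i=d-1$. As this $C_i$ is a $(d-1)$-simplex, it contributes to $h_d$, giving $h_d\ge 1$.

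I expect the only delicate point to be the identity $\dim P_i=\max(\dim P_{i-1},\dim C_i+1)$, which relies on two facts already built into the construction of \eqref{eq_conic_decomp}: that $E_i$ is the unique maximal face of the deleted interval $[v_i,E_i]$, and that $\dim C_i=\dim E_i-1$ because $C_i$ is a transverse hyperplane section of $E_i$. Everything after that is the elementary observation that a non-decreasing integer-valued sequence with unit increments omits no value between its endpoints, so no further polytope theory is required.
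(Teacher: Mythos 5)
Your proof is correct and follows essentially the same route as the paper: it identifies $h_k$ as the number of $(k-1)$-simplices in $\mathscr{C}_{\{\mc{P}_i\}}$ via Proposition~\ref{prop_f_vec}, and then derives positivity from the constraint $\dim C_i\leq \dim P_{i-1}$. The paper states this last implication in a single sentence; your unit-increment dimension-climbing argument is precisely the justification it leaves implicit, so there is nothing to add.
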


Using a similar idea of Corollary \ref{cor_nec_cond}, one can also discuss a necessary condition for a polytope $P$ to have a conic sequence such that the corresponding sequence $\mathscr{C}_{\{\mc{P}_i\}}$  consists of cubes. Indeed, since the generating function $\Phi_{I^{d-1}}(x)$ for the $(d-1)$-cube $I^{d-1}$ is 
\[
\Phi_{I^{d-1}}(x)=\sum_{k=1}^d {d-1 \choose k-1}2^{d-k}x^{k-1},
\]
we have 
\[
1+x\Phi_{I^{d-1}}(x)=1+x(x+2)^{d-1}.
\]
Therefore, if $P$ admits a conic sequence such that $\mathscr{C}_{\{\mc{P}_i\}}$ consists of cubes, Proposition \ref{prop_f_vec} gives us 
\begin{equation*}%\label{eq_fvec_hhvec_cube}
\sum_{k=0}^{\dim P} f_k(P)x^k= \sum_{k=0}^{\dim P} h_k^\square \left(1+x(x+2)^{k-1}\right),
\end{equation*}
for some positive integers $h_k^\square$'s with $h_1^\square=1$. 
 
We refer to  Example~\ref{ex_BIP}, where $h^\square(\mathsf{Q}_{1324, 4231})= (h^\square_0, h^\square_1, h^\square_2, h^\square_3)=(1,4,10,1)$. 

We remark that the same argument can apply to other conic sequences with $\mathscr{C}_{\{\mc{P}_i\}}$ consisting of certain polytopes whose generating functions are written as polynomials depending only on their dimensions like simplices or cubes.

\section{Conic decomposition of a toric variety}\label{sec_conic_decomp_X_P}
We begin this section with a topological model of a projective toric variety $X_P$ associated with a lattice polytope $P$, following \cite[Chapter 1]{Jor} which is attributed to MacPherson. This gives us a way to understand the underlying topological space of $X_P$. We also investigate several vanishing theorems for the singular cohomology of the toric variety $X_P$, associated with a conic polytope~$P$.  

\subsection{Topological model of a toric variety} Let $M$ be the character lattice of the $n$-dimensional compact torus $T^n$. We consider an $n$-dimensional lattice polytope~$P$ in $M_\RR\colonequals M\otimes_\ZZ \RR$. Let $\mathcal{F}(P)$ be the set of facets of $P$ and  $\lambda(F)$ be the primitive outward normal vector of $F\in \mathcal{F}(P)$. Regarding $\lambda(F)$ as an element in the lattice $N$ of one-parameter subgroups of $T^n$, we denote by $T_{F}$ the circle subgroup of $T^n$ generated by $\lambda(F)$. In general, given a face $E$ of $P$,
we denote by $N_E$ the submodule generated by 
\begin{equation}\label{eq_facets_intersecting_face}
\{\lambda(F)\in N \mid F\in \mathcal{F}(P),~F\cap E\neq \emptyset\}.
\end{equation}
Then $N_E$ determines the subtorus $T_E\colonequals N_E\otimes_\ZZ \RR/ \left((N_E\otimes_\ZZ \RR)\cap N\right)$ of $T^n$.

The topological model of the toric variety $X_P$ associated with a lattice polytope $P$ is given 
\begin{equation}\label{eq_colim_X_P}
X_P\colonequals (P \times T^n )/_\sim,
\end{equation}
where we identify $(p,t)\sim(q,s)$ whenever $p=q$ and $t^{-1}s \in T_E$ if $p$ is in the relative interior of $E$. Notice that the action of $T^n$ on $X_P$ given by the multiplication on the second factor induces the orbit map $\pi \colon X_P \to P$, that is the projection onto the first factor. 

If $P$ is a simple lattice polytope, then each face $E$ of codimension-$k$ in $P$ is the transversal intersection of $k$ facets of $P$ by definition. Hence the elements in \eqref{eq_facets_intersecting_face} are linearly independent. In this case, the resulting toric variety $X_P$ has at worst finite quotient singularities and $X_P$ is often called a toric orbifold, see \cite[Theorem 11.4.8]{CLS} for instance. We also refer to \cite[Section 7]{DJ} and  \cite{PS} for more rigorous topological interpretation of a toric orbifold. 

\subsection{Toric variety with a conic decomposition}
In what follows, we mainly consider a conic lattice polytope $P$ in $M_\RR$.  We may regard each $C_i\in \mathscr{C}_{\{\mc{P}_i\}}$ as a rational polytope lying in an affine subspace of $M_\RR$. This yields the following diagram, which we call a \emph{conic decomposition} of~$X_P$; 
\begin{equation}\label{eq_conic_on_X}
\begin{tikzcd}[column sep=small]
Y_2 \dar{\Phi_2} &  &  Y_i \dar{\Phi_i}   & Y_{i+1} \dar{\Phi_{i+1}}  &   &  Y_{\ell} \dar{\Phi_{\ell}}  &  \\
X_1 \rar[hook] & \cdots   \rar[hook]  &  X_{i-1}  \rar[hook]   & X_{i} \rar[hook] & \cdots \rar[hook]  & X_{\ell-1} \rar[hook]  & X_{\ell}=X, 
\end{tikzcd}\end{equation}
where $X_i=\pi^{-1}(P_i)$ and $Y_i=\pi^{-1}(C_i)$ with respect to the orbit map $\pi \colon X_P \to P$. Moreover,  the attaching map $\phi_i$ defined in \eqref{eq_attaching_map} induces a map $\Phi_i \colon Y_i \to X_{i-1}$ by the following commutative diagram: 
\[
\begin{tikzcd}
C_i \times T^n  \arrow{d}{\phi_i \times id} \rar[two heads]& (C_i \times T^n)/_{\sim}=Y_i   \arrow{d}{\Phi_i}  \\ P_{i-1} \times T^n \rar[two heads]  & (P_{i-1} \times T^n)/_\sim = X_{i-1}. 
\end{tikzcd}
\]
This leads us to understand each $X_i$ as the cofiber of  $\Phi_i \colon Y_i \to  X_{i-1}$.

The next two lemmas describe the topology of $Y_i$ when (i) $C_i$ is a simplex or (ii) $C_i$ is a simple polytope more generally.
\begin{lemma}\label{lem_Y_i_simplex}
If $C_i$ is a simplex $\Delta^{d_i-1}$ for some $d_i\geq 1$, then $Y_i$ is homeomorphic to $S^{2d_i-1}/K_i$ for some finite group $K_i$. 
\end{lemma}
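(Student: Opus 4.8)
The plan is to peel off the part of the torus acting trivially on $Y_i$ and recognise what remains as the standard $T^{d}$-action on an odd-dimensional sphere, twisted by a finite group coming from a comparison of lattices. Throughout write $d=d_i$, let $G_1,\dots,G_d$ be the facets of the simplex $C_i=\Delta^{d-1}$, and for $S\subseteq\{1,\dots,d\}$ put $G_S=\bigcap_{j\in S}G_j$, a face of $C_i$ of dimension $d-1-|S|$. Under the correspondence recorded just before \eqref{eq_attaching_map}, each $G_S$ is the cross-section of a unique face $\widehat{G}_S=\bigcap_{j\in S}\widehat{G}_j$ of $E_i$ containing $v_i$, with $\dim\widehat{G}_S=d-|S|$; in particular $\widehat{G}_\emptyset=E_i$ and $\bigcap_{j=1}^d\widehat{G}_j=\{v_i\}$. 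A point of $\relint(G_S)$ lies, as a point of $P$, in $\relint(\widehat{G}_S)$, so by \eqref{eq_colim_X_P} the fibre of $\pi$ over it is the torus $T^n/T_{\widehat{G}_S}$.

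First I would reduce the acting torus. Since $\widehat{G}_S\subseteq E_i$, every facet of $P$ containing $E_i$ also contains $\widehat{G}_S$, whence $N_{E_i}\subseteq N_{\widehat{G}_S}$ and $T_{E_i}\subseteq T_{\widehat{G}_S}$ for every $S$. Thus $T_{E_i}$ is collapsed over all of $C_i$, and the $T^n$-action on $Y_i$ factors through $T'\colonequals T^n/T_{E_i}$. Recalling that over $\relint(E)$ the orbit $T^n/T_E$ has dimension $\dim E$, we get $\rank N_E=n-\dim E$ for every face $E$, hence $\rank N_{\widehat{G}_S}=(n-d)+|S|$ and in particular $\rank N_{E_i}=n-d$. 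Therefore $T'\cong T^d$ and $Y_i$ is a $T'$-space over $C_i=\Delta^{d-1}$ of real dimension $(d-1)+d=2d-1$.

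Next I would pin down the lattice data. Set $\bar N\colonequals N/(N\cap(N_{E_i}\otimes_{\ZZ}\RR))$, a rank-$d$ lattice with $T'=\bar N\otimes_{\ZZ}\RR/\bar N$, and let $\bar\mu_j\in\bar N$ be the image of a primitive generator of the rank-one lattice $N_{\widehat{G}_j}/N_{E_i}$. Because $\widehat{G}_1,\dots,\widehat{G}_d$ are the facets of the \emph{simplex} $E_i$ through $v_i$, one has $\bigcap_j\mathrm{aff}(\widehat{G}_j)=\{v_i\}$, so taking annihilators gives $\sum_j N_{\widehat{G}_j}\otimes\RR=N\otimes\RR$; consequently the $d$ vectors $\bar\mu_1,\dots,\bar\mu_d$ are linearly independent and $\Lambda\colonequals\ZZ\langle\bar\mu_1,\dots,\bar\mu_d\rangle$ is a finite-index sublattice of $\bar N$. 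Put $K_i\colonequals\bar N/\Lambda$. The rank computation of the previous paragraph also identifies the subtorus collapsed over $\relint(G_S)$, namely the image of $T_{\widehat{G}_S}$ in $T'$, as the connected subtorus with Lie algebra $\sum_{j\in S}\RR\bar\mu_j$.

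Finally I would compare with the model sphere. Using the basis $\{\bar\mu_j\}$ to identify $\Lambda\otimes\RR/\Lambda$ with the standard $T^d$, the space $Z\colonequals(C_i\times(\Lambda\otimes\RR/\Lambda))/\!\sim$, obtained by collapsing $\sum_{j\in S}\RR\bar\mu_j$ over $\relint(G_S)$ (so the $j$-th coordinate circle degenerates over $G_j$), is precisely the unit sphere $S^{2d-1}\subseteq\CC^d$ presented through its moment map $z\mapsto(|z_1|^2,\dots,|z_d|^2)$ onto $\Delta^{d-1}$, with its coordinatewise $T^d$-action. The inclusion $\Lambda\subseteq\bar N$ induces a covering homomorphism $q\colon\Lambda\otimes\RR/\Lambda\to T'$ whose deck group is $K_i$, and $q$ carries the collapsing datum of $Z$ onto that of $Y_i$ because both are described by the same subspaces $\sum_{j\in S}\RR\bar\mu_j$. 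Hence $(p,t)\mapsto(p,q(t))$ descends to a continuous bijection $Z/K_i\to Y_i$, which is a homeomorphism since both sides are compact Hausdorff, giving $Y_i\cong S^{2d-1}/K_i$ with $K_i=\bar N/\Lambda$ acting on $S^{2d-1}$ through the coordinatewise $T^d$-action.

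The main obstacle is the bookkeeping in this last step: one must verify that the map induced by $q$ is well defined on the quotients (collapses are sent to collapses), that its fibres are exactly the $K_i$-orbits, and that $Z$ is genuinely the sphere $S^{2d-1}$ in its moment presentation. The two places where the simplex hypothesis is actually used are the linear independence of the $\bar\mu_j$ — which rests on $\bigcap_j\widehat{G}_j=\{v_i\}$, valid precisely because $C_i$ is a simplex — and the identification of the standard collapsing pattern over $\Delta^{d-1}$ with the sphere.
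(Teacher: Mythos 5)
Your proposal is correct in substance, but it executes the identification differently from the paper, whose own proof is only a sketch with details deferred to \cite[Proposition 4.4]{BNSS} and \cite[Corollary 3.7]{SaSo2}. The paper never works on $Y_i$ directly: it identifies the open stratum $X_i - X_{i-1}\cong \pi^{-1}(U_i)$, where $U_i = P_i - P_{i-1}\cong \RR^{d_i}_{\geq}$, with a finite linear quotient $D^{2d_i}/K_i$, and then reads off $Y_i\cong S^{2d_i-1}/K_i$ from the fact that $X_i$ is the mapping cone of $\Phi_i$. You instead present $Y_i$ itself as an identification space $(C_i\times T')/\!\sim$ over the simplex, recognize the corresponding model with lattice $\Lambda$ as the moment-map presentation of $S^{2d-1}$ over $\Delta^{d-1}$, and exhibit the homeomorphism explicitly as the quotient by the deck group $K_i=\bar N/\Lambda$ of a torus covering. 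Both proofs ultimately rest on the same lattice comparison (the finite index of the sublattice spanned by the characteristic data at the simple vertex $v_i$), but yours is self-contained, and it has the genuine advantage of bypassing the inference from ``open stratum $\cong$ open cone on $S^{2d-1}/K_i$'' to ``link $\cong S^{2d-1}/K_i$,'' which is not formal for abstract cones and which the paper's sketch leaves implicit (it holds there because the identification respects the cone structure).

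Two imprecisions should be repaired, both in the linear-independence step. First, $E_i$ need not be a simplex; only its vertex figure at $v_i$, namely $C_i$, is one. Second, your closing remark that the independence of $\bar\mu_1,\dots,\bar\mu_d$ ``rests on $\bigcap_j\widehat{G}_j=\{v_i\}$'' cites too weak a fact: an intersection of faces can be a single vertex while the intersection of their affine hulls is strictly larger (the two opposite triangular facets of a square pyramid meet only at the apex, yet their planes meet in a line). What you need, and what is true here, is $\bigcap_j\mathrm{aff}(\widehat{G}_j)=\{v_i\}$, and this follows from the simplex hypothesis as follows: since $C_i=\Delta^{d-1}$ has exactly $d$ vertices, $E_i$ has exactly $d$ edges at $v_i$, whose direction vectors $u_1,\dots,u_d$ span the $d$-dimensional tangent cone of $E_i$ at $v_i$ and are therefore linearly independent; moreover $\mathrm{aff}(\widehat{G}_j)=v_i+\mathrm{span}\{u_k : k\neq j\}$ by a dimension count, so
\begin{equation*}
\bigcap_{j=1}^{d}\mathrm{aff}(\widehat{G}_j)=v_i+\bigcap_{j=1}^{d}\mathrm{span}\{u_k : k\neq j\}=\{v_i\}.
\end{equation*}
With this substitution the rest of your argument, including the covering-space bookkeeping in the last step, goes through.
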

\begin{proof}
The proof is essentially the same as the proof of \cite[Propsotion 4.4]{BNSS} and \cite[Corollary 3.7]{SaSo2}. However, for the reader’s convenience, we shall provide a sketch of the proof. Consider the conic decomposition in \eqref{eq_conic_on_X}, which  gives us 
\[
X_i-X_{i-1} = \pi^{-1}(P_i)-\pi^{-1}(P_{i-1}) \cong \pi^{-1}(U_{i}),
\]
where $U_{i}=P_{i}-P_{i-1}$ is homeomorphic to $\RR^{d_i}_\geq$ as a manifold with corners because $C_i=\Delta^{d_i-1}$. Then \eqref{eq_colim_X_P} implies that 
\begin{equation*}%\label{eq_pair_is_equiv_to_q-disk}
\pi^{-1}(U_{i}) \cong U_{i} \times T^{d_i}/_\sim \cong D^{2d_i}/K_i
\end{equation*}
for some finite subgroup $K_i$ of $T^{d_i+1}$ acting linearly on $D^{2d_i}$. Since $X_i$ is a mapping cone of the map $\Phi_i \colon Y_i \to  X_{i-1}$, we conclude that 
$Y_i\cong S^{2d_i-1}/K_i$ as desired.  
\end{proof}
We notice that the space $S^{2d_i-1}/K_i$ in Lemma \ref{lem_Y_i_simplex} above is called  an \emph{orbifold lens space} in \cite{BSS}. In Lemma \ref{lem_Y_i_for_simple_poly} below, we consider the case where $C_i$ is a simple polytope. We refer the readers to \cite{Rua} for the rigorous definition of an orbifold fiber bundle. 

\begin{lemma} \label{lem_Y_i_for_simple_poly}
If $C_i$ is a simple polytope, then $Y_i$ is an orbifold $S^1$-fiber bundle over a toric orbifold.
\end{lemma}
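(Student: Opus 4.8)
The plan is to analyze the space $Y_i = \pi^{-1}(C_i)$ directly from the topological model \eqref{eq_colim_X_P}, mimicking the structure of the proof of Lemma \ref{lem_Y_i_simplex} but now tracking a nontrivial base orbifold rather than collapsing everything to a lens space. First I would regard $C_i$ as a simple rational polytope sitting in an affine subspace of $M_\RR$, say of dimension $d_i$, so that $C_i$ itself carries the structure of a simple lattice (or rational) polytope of dimension $d_i$. The normal fan of $C_i$ determines a toric orbifold, which I will denote $X_{C_i}$, built from $(C_i \times T^{d_i})/_\sim$ exactly as in \eqref{eq_colim_X_P}. The goal is then to exhibit $Y_i$ as the total space of an orbifold $S^1$-bundle over $X_{C_i}$.

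The key structural point is a splitting of the torus. Because the cone vertex $v_i$ lies in the interior of the unique maximal face $E_i$ and $\dim C_i \le \dim P_{i-1}$, there is a distinguished circle direction in $T^n$ transverse to the facets of $P$ meeting $E_i$ along the $v_i$-direction; concretely, the normal vector $\lambda$ associated to the facet $F_i$ of $E_i$ that separates $v_i$ from $P_{i-1}$ (the facet whose slice gives $C_i$) generates a circle subgroup $T_{F_i} \cong S^1$ of $T^n$. The remaining $d_i$-dimensional subtorus, coming from the facet normals of $C_i$ inside the affine hull of $E_i$, governs the toric-orbifold identifications over the interior of $C_i$. Next I would write the equivalence relation on $C_i \times T^n$ coming from \eqref{eq_colim_X_P} and check that, face by face of $C_i$, the identifications are precisely those producing $X_{C_i}$ in the $d_i$ base directions, while the $S^1 = T_{F_i}$ factor is never collapsed because $F_i$ meets every face of $C_i$ (indeed $C_i \subset F_i$). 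This exhibits a projection $Y_i \to X_{C_i}$ whose fibers are single $S^1$-orbits, and over the free locus of the base it is genuinely a principal $S^1$-bundle.

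The main obstacle, and the step I would spend the most care on, is the behavior over the singular strata of the base toric orbifold: over a face $G$ of $C_i$ of positive codimension, the base has a finite isotropy group coming from the noncompact-lattice index of the facet normals meeting $G$, and one must verify that the $S^1$-fiber is compatibly acted on so that the total space is locally modeled on $(S^1 \times \text{cone}) / K$ for the appropriate finite group $K$ — that is, it is an orbifold fiber bundle in the sense of \cite{Rua} rather than merely a topological circle bundle. I would handle this by working in the local charts furnished by the proof of Lemma \ref{lem_Y_i_simplex}: over each vertex-neighborhood of $C_i$ the slice is $\RR^{d_i}_\ge \times T^{d_i+1}/_\sim \cong D^{2d_i} \times S^1 / K$, and checking that these local $S^1$-bundle charts glue equivariantly gives the desired global orbifold $S^1$-fiber bundle structure.

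Finally I would assemble these local descriptions into the statement: the projection $Y_i \to X_{C_i}$ is an orbifold $S^1$-fiber bundle over the toric orbifold $X_{C_i}$ associated with the simple polytope $C_i$. I expect the verification that the $T_{F_i}$-direction survives all the identifications to be short once the facet-intersection bookkeeping of \eqref{eq_facets_intersecting_face} is set up correctly, so the real content lies in the orbifold-chart compatibility at the singular strata.
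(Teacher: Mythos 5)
Your overall strategy---realizing $Y_i$ as the total space of an orbifold $S^1$-bundle whose base is a toric orbifold with orbit space $C_i$, the circle coming from a direction transverse to $E_i$---is the same as the paper's, which constructs a circle action on $Y_i$ and shows that the orbit map $Y_i \to Y_i/S^1$ is the desired bundle. However, your construction of the circle contains a genuine error. You invoke ``the facet $F_i$ of $E_i$ that separates $v_i$ from $P_{i-1}$ (the facet whose slice gives $C_i$)'' and its normal $\lambda(F_i)$ to generate a subgroup $T_{F_i}$. No such facet exists: by definition (see the discussion around \eqref{eq_attaching_map} and Figure \ref{fig_attaching_map}), $C_i$ is the intersection of $E_i$ with an \emph{auxiliary affine hyperplane}, which is not a face of $P$ (in Example \ref{ex_square}, $C_4$ is a diagonal slice of the square, not an edge). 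Hence there is no characteristic vector $\lambda(F_i)$ attached to it and no circle subgroup $T_{F_i}$ in the toric data, and the subsequent claim that ``the $T_{F_i}$ factor is never collapsed because $C_i \subset F_i$'' has no meaning as stated. The paper repairs exactly this point: it chooses the slicing hyperplane $\mathcal{A}_{C_i}$ to be rational, takes a primitive $\mu_{C_i} \in N \setminus N_{E_i}$ determining it, and defines the fiber circle $S^1_{\mu_{C_i}}$ as the image of the one-parameter subgroup $\mu_{C_i}$ in $T^n/T_{E_i}$; it is the transversality of $\mathcal{A}_{C_i}$ to $E_i$, not any incidence of faces, that guarantees the circle survives all identifications.

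A second gap is your identification of the base with $X_{C_i}$, the toric orbifold of the normal fan of $C_i$; this is both unjustified and stronger than needed. The identifications on $\pi^{-1}(C_i)$ are induced by the facets of $P$, so the quotient $Y_i/S^1_{\mu_{C_i}} \cong (C_i \times T^n/T_{C_i})/_\sim$ carries characteristic data given by the projections $\rho_i(\lambda(F_{i_j}))$ under $\rho_i \colon N \twoheadrightarrow N/N_{C_i}$; these need not coincide with the facet normals of $C_i$ in its own affine lattice, so the base need not be the normal-fan toric variety of $C_i$. What actually must be checked---and this is precisely where the paper uses simplicity of $C_i$---is that at each vertex $w = C_i \cap \bigcap_{j=1}^{\dim C_i}(E_i \cap F_{i_j})$ the projected vectors $\rho_i(\lambda(F_{i_j}))$ are linearly independent, so that the quotient is a toric orbifold. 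Your proposal uses simplicity only to assert that the normal fan of $C_i$ is simplicial, which does not address this, and your plan to glue the vertex charts of Lemma \ref{lem_Y_i_simplex} leaves exactly this linear-independence verification implicit. With the circle defined via $\mu_{C_i}$ and this check supplied, your argument becomes the paper's proof.
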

\begin{proof}
Let $E_i$ be the face of $P$ defined in \eqref{eq_conic_seq_next_term}. Then it is the minimal face of $P$ containing $C_i$. Since the equivalence relation $\sim$ given in \eqref{eq_colim_X_P} collapses $T_{E_i}$ globally on $C_i$, we may write 
\[
Y_i =(C_i \times T^n/T_{E_i})/_\sim. 
\]
Take an affine subspace $\mathcal{A}_{C_i}\subset M_\RR$ intersecting $E_i$ transversely such that $C_i$ is a rational polytope in $\mathcal{A}_{C_i}$ with respect to the lattice $\mathcal{A}_{C_i}\cap M$.  Let $\mu_{C_i}\in N\setminus N_{E_i}$ be a primitive element which determines $\mathcal{A}_{C_i}$. Then the submodule $N_{C_i}\subset N$, generated by all elements of $N_{E_i}$ together with $\mu_{C_i}$, gives rise to the torus 
\[
T_{C_i}\colonequals N_{C_i}\otimes_\ZZ \RR/\left( (N_{C_i} \otimes_\ZZ \RR)\cap N\right)
\]
with $\dim T_{C_i}=\dim T_{E_i}+1$. Accordingly, we have $\dim T^n/T_{E_i} = \dim T^n/T_{C_i}+1$. 

Let $S^1_{\mu_{C_i}}$ be the image of the composition 
\begin{equation}\label{eq_S1-composition}
S^1 \xrightarrow{\mu_{C_i}} T^n \twoheadrightarrow T^n/T_{E_i}
\end{equation}
regarding $\mu_{C_i}$ as a one-parameter subgroup of $T^n$. 
We consider the $S^1$-action on $Y_i$ given by \eqref{eq_S1-composition}. Then we claim that the corresponding orbit map 
\begin{equation*}%\label{eq_projection_S1_bdl}
Y_i \twoheadrightarrow Y_i/S^1_{\mu_{C_i}}.
\end{equation*}
is the desired orbifold $S^1$-fiber bundle over a toric orbifold. 
Since $\mu_{C_i}\in N\setminus N_{E_i}$, one can see that 
\begin{equation}\label{eq_toric_orb_Y_i_modulo_S1}
Y_i/S^1_{\mu_{C_i}}\cong (C_i\times T^n/T_{C_i})/_\sim,
\end{equation}
where the identification $\sim$ is the one induced from \eqref{eq_colim_X_P}. 
Since $C_i$ is a simple polytope by the assumption and $P_i$ is the mapping cone of the map $\phi_i \colon C_i \to P_{i-1}$ such that 
\[
P_i-P_{i-1}=\bigcup_{F\in [v_i, E_i]}\relint (F),\]
any vertex of $C_i$, say $w$, is the transversal intersection of $C_i$ and an edge of $E_i$, namely
\[w=C_i\cap  \bigcap_{j=1}^{\dim C_i} (E_i \cap F_{i_j})\]
for some facets $F_{i_j}$'s of $P$. This implies that the image \[
\{\rho_i(\lambda(F_{i_j}))\mid j=1, \dots, \dim C_i\}
\]
with respect to the projection $\rho_i \colon N \twoheadrightarrow N/N_{C_i}$ is linearly independent, and this is true for each vertex of $C_i$.  Therefore, $Y_i/S^1_{\mu_{C_i}}$ in~\eqref{eq_toric_orb_Y_i_modulo_S1} is a toric orbifold, which establishes the claim. 
\end{proof}

\section{Application to singular cohomology}
\label{sec_cohomology}
In this section, we prove several vanishing theorems for the singular cohomology of a toric variety $X_P$ with a conic decomposition. In particular, we prove that if $P$ admits a $\Delta$-conic sequence, then the (cohomology) Betti numbers of $X_P$ are completely determined by the face numbers of $P$. Throughout this section, we mean by $H^\ast(-)$ the singular cohomology with rational coefficients. 

\begin{theorem}\label{thm_main}
Let $X_P$ be a projective toric variety and $P$ the associated polytope. If there exists a conic sequence on  $P$ such that $\widetilde{H}^{2k}(Y_i)=0$ for all $i=2, \dots, \ell$, then $H^{2k+1}(X)=0$. 
\end{theorem}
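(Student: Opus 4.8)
The plan is to run an induction on the index $i$ in the filtration \eqref{eq_conic_on_X}, exploiting the fact, recorded in item (3) following \eqref{eq_conic_decomp} and transported to $X_P$ via the orbit map $\pi$, that each stratum $X_i$ is the mapping cone of $\Phi_i\colon Y_i \to X_{i-1}$. Since the inclusion of the base of a mapping cone is a cofibration and the quotient $X_i/X_{i-1}$ is homeomorphic to the suspension $\Sigma Y_i$, the cofiber sequence \eqref{eq_cofiber_seq} produces a Puppe long exact sequence in reduced rational cohomology
\begin{equation*}
\cdots \to \widetilde{H}^{n-1}(Y_i) \xrightarrow{\delta} \widetilde{H}^{n}(X_i) \to \widetilde{H}^{n}(X_{i-1}) \xrightarrow{\Phi_i^\ast} \widetilde{H}^{n}(Y_i) \to \cdots,
\end{equation*}
where the degree shift comes from the identification $\widetilde{H}^{n}(X_i/X_{i-1}) \cong \widetilde{H}^{n}(\Sigma Y_i) \cong \widetilde{H}^{n-1}(Y_i)$.

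First I would specialize this sequence to the degree $n=2k+1$ in question, obtaining the exact four-term piece
\begin{equation*}
\widetilde{H}^{2k}(Y_i) \xrightarrow{\delta} \widetilde{H}^{2k+1}(X_i) \to \widetilde{H}^{2k+1}(X_{i-1}) \xrightarrow{\Phi_i^\ast} \widetilde{H}^{2k+1}(Y_i).
\end{equation*}
The hypothesis $\widetilde{H}^{2k}(Y_i)=0$ kills the left-hand term, so the restriction map $\widetilde{H}^{2k+1}(X_i) \to \widetilde{H}^{2k+1}(X_{i-1})$ is injective. This injectivity is the engine of the induction: once $\widetilde{H}^{2k+1}(X_{i-1})=0$ is known, it forces $\widetilde{H}^{2k+1}(X_i)=0$ as well.

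For the base case I would use that the initial term $X_1$ is a single point, namely the fiber of $\pi$ over the vertex $P_1$, as recorded in \eqref{eq_stratification}; hence $\widetilde{H}^{2k+1}(X_1)=0$ for every $k\ge 0$, since $2k+1\ge 1$. Propagating the injectivity up the filtration then yields $\widetilde{H}^{2k+1}(X_i)=0$ for all $i$, and in particular $\widetilde{H}^{2k+1}(X_\ell)=\widetilde{H}^{2k+1}(X)=0$. Finally, because $2k+1\ge 1$, reduced and unreduced cohomology agree in this degree, giving $H^{2k+1}(X)=0$ as claimed.

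The argument is essentially formal once the cofibration structure of \eqref{eq_conic_on_X} is in place, so I do not expect a genuine obstacle in the cohomological bookkeeping. The one point that deserves care is keeping the degree shift in the connecting homomorphism explicit: the even-degree vanishing of $Y_i$ must be applied in degree $2k$, exactly one below the target degree $2k+1$, and it is precisely the fact that $\widetilde{H}^{2k}(Y_i)$ sits as the \emph{domain} of $\delta$ that yields injectivity rather than merely an exact sequence. I would therefore write out this four-term piece carefully and verify that the mapping-cone identification $X_i/X_{i-1}\cong \Sigma Y_i$ holds uniformly for every $i$, which is what makes the single induction propagate the vanishing all the way to $X=X_\ell$.
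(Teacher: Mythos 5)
Your proof is correct and follows essentially the same route as the paper: induction along the filtration \eqref{eq_filtration_on_X}, using the long exact sequence \eqref{eq_les_from_cofib} of the cofibration $Y_i \xrightarrow{\Phi_i} X_{i-1} \to X_i$, where the hypothesis $\widetilde{H}^{2k}(Y_i)=0$ kills the term immediately to the left of $\widetilde{H}^{2k+1}(X_i)$ and forces the vanishing to propagate. The only cosmetic difference is that you base the induction at the point $X_1$ and handle $i=2$ by the same exact sequence (which also covers $k=0$ uniformly), whereas the paper checks $X_2\cong S^2$ directly and runs the inductive step for $i>2$.
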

\begin{proof}
Consider the conic decomposition in \eqref{eq_conic_on_X} induced from the given conic sequence  on $P$. We prove the claim by the induction on the indices of the following filtration
\begin{equation}\label{eq_filtration_on_X}
X_1 \hookrightarrow X_2 \hookrightarrow \cdots \hookrightarrow X_{\ell-1} \hookrightarrow X_\ell =X
\end{equation}
induced from the conic decomposition of $X_P$ as in \eqref{eq_conic_on_X}. 

First, since $P_1$ is a vertex of $P$, $X_1$ is zero-dimensional. Since $P_2$ is an edge of $P$, $X_2=\pi^{-1}(P_2)$ is homeomorphic to $S^2$, which means $H^{odd}(X_2)$ vanishes. We now claim that $H^{2k+1}(X_{i})=0$, provided $H^{2k+1}(X_{i-1})=0$ for $i>2$. 
Since each composition 
\begin{equation}\label{eq_cofibration}
Y_i \xrightarrow{\Phi_i} X_{i-1} \to X_i
\end{equation}
is a cofibration, we have the following long exact sequence of the cohomology groups
\begin{equation}\label{eq_les_from_cofib}
\cdots \to \widetilde H^{j-1}(Y_i) \to \widetilde H^j(X_i) \to \widetilde H^j(X_{i-1}) \to \widetilde H^j(Y_i) \to \cdots. 
\end{equation}
Thus, when $j=2k+1$ for some $k>0$, the assumption of $H^{2k}(Y_i)=0$ together with the induction hypothesis establishes the claim. 
\end{proof}

We now introduce several applications of Theorem~\ref{thm_main}. Recall from Lemma~\ref{lem_Y_i_for_simple_poly} that if $P$ admits a conic sequence such that  $\mathscr{C}_{\{\mc{P}_i\}}$ consists of simple polytopes, then each $Y_i$ is an orbifold $S^1$-fiber bundle over a toric orbifold. The following proposition has been studied in~\cite{Luo-Thesis}, where the author mainly discusses smooth cases. However, if we restrict our attention to the singular  cohomology with rational coefficients, then the same statement holds for an orbifold $S^1$-fiber bundle over a toric orbifold. The main tool for the proof is the Gysin sequence. We refer to \cite[Section 5]{PS} for the Gysin sequence in the orbifold setup. 
\begin{proposition}\cite[Remark 2.5.10, Theorem 2.5.48]{Luo-Thesis} \label{prop_cohom_S1-bdl}
Let $C_i$ be a simple polytope and $Y_i=\pi^{-1}(C_i)$ as above. Then $H^k(Y_i)=0$ if $k$ is even  with~$k > \dim C_i$ or if $k$ is odd with~$k\leq \dim C_i $.
\end{proposition}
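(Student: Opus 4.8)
The plan is to analyze the orbifold $S^1$-fiber bundle $S^1 \to Y_i \to Z_i$ from Lemma~\ref{lem_Y_i_for_simple_poly}, where $Z_i := Y_i/S^1_{\mu_{C_i}}$ is the toric orbifold $(C_i \times T^n/T_{C_i})/_\sim$ associated to the simple polytope $C_i$, via the Gysin sequence for orbifold circle bundles. Let me set $d := \dim C_i$, so that $\dim_\CC Z_i = d$ as a toric orbifold and $\dim_\RR Y_i = 2d+1$. The Gysin sequence (in the orbifold setting, \cite[Section 5]{PS}) reads
\[
\cdots \to H^{k-2}(Z_i) \xrightarrow{\;\cup\, e\;} H^{k}(Z_i) \xrightarrow{\;p^*\;} H^{k}(Y_i) \to H^{k-1}(Z_i) \xrightarrow{\;\cup\, e\;} H^{k+1}(Z_i) \to \cdots,
\]
where $e \in H^2(Z_i;\QQ)$ is the rational Euler class of the fibration and $p \colon Y_i \to Z_i$ is the projection. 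The key structural input is that $Z_i$ is a toric orbifold, so by the discussion in the introduction (see \eqref{eq_fvec_hvec}) its rational cohomology is concentrated in even degrees, i.e. $H^{\mathrm{odd}}(Z_i) = 0$, and it satisfies Poincar\'e duality with top nonvanishing degree $2d$.

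\textbf{Odd case ($k$ odd, $k \le d$).} For odd $k$ the Gysin sequence gives the segment
\[
H^{k}(Z_i) \xrightarrow{\;p^*\;} H^{k}(Y_i) \to H^{k-1}(Z_i) \xrightarrow{\;\cup\, e\;} H^{k+1}(Z_i).
\]
Since $k$ is odd, $H^{k}(Z_i) = 0$, so $p^*$ contributes nothing and $H^k(Y_i)$ injects into $H^{k-1}(Z_i)$. The target of that injection sits in the map $\cup\,e \colon H^{k-1}(Z_i) \to H^{k+1}(Z_i)$ (an even-to-even degree map between the nonvanishing groups), so I would finish by showing this cup-with-Euler-class map is injective in the relevant range. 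This is exactly where the hypothesis $k \le d$ enters: the hard Lefschetz property for the toric orbifold $Z_i$ (available since $Z_i$ is rationally smooth and projective) guarantees that $\cup\, e^{d-j} \colon H^{2j}(Z_i) \to H^{2(d-j)}(Z_i)$ is an isomorphism for $j \le d$, and in particular that $\cup\, e$ is injective on $H^{2j}(Z_i)$ whenever $2j < d$. With $k-1 = 2j$ and $k \le d$ one has $2j \le d-1 < d$, so $\cup\, e$ is injective on $H^{k-1}(Z_i)$, forcing $H^k(Y_i) = 0$.

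\textbf{Even case ($k$ even, $k > d$).} Here I use the other kind of input: dimension and Poincar\'e duality on $Z_i$. For $k > 2d$ both flanking groups $H^{k}(Z_i)$ and $H^{k-1}(Z_i)$ vanish for dimension reasons, giving $H^k(Y_i) = 0$ immediately, so the interesting range is $d < k \le 2d$ with $k$ even. The Gysin segment is
\[
H^{k-2}(Z_i) \xrightarrow{\;\cup\, e\;} H^{k}(Z_i) \xrightarrow{\;p^*\;} H^{k}(Y_i) \to H^{k-1}(Z_i),
\]
and again $H^{k-1}(Z_i) = 0$ since $k-1$ is odd, so $p^*$ is surjective and $H^k(Y_i) \cong \operatorname{coker}(\cup\, e \colon H^{k-2}(Z_i) \to H^{k}(Z_i))$. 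Writing $k = 2j$ with $d < 2j \le 2d$, i.e. $d/2 < j \le d$, the hard Lefschetz property again shows $\cup\, e \colon H^{2(j-1)}(Z_i) \to H^{2j}(Z_i)$ is \emph{surjective} precisely in this upper range (it is the dual statement to the injectivity used above), so the cokernel vanishes and $H^k(Y_i) = 0$.

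\textbf{The main obstacle} is justifying the hard Lefschetz property for the toric orbifold $Z_i$ with \emph{rational} coefficients, together with the correct interpretation of the Euler class $e$ as an ample/Lefschetz class. Since $Z_i = (C_i \times T^n/T_{C_i})/_\sim$ is the rationally smooth projective toric variety attached to the simple rational polytope $C_i$, hard Lefschetz holds for $H^*(Z_i;\QQ)$ by the standard theory of simplicial toric varieties, and one must check that the Euler class of the orbifold $S^1$-bundle $Y_i \to Z_i$ furnished by \eqref{eq_S1-composition} is (a nonzero rational multiple of) an ample class on $Z_i$ — equivalently that the circle action has the expected weights so that $e$ pairs correctly with the polytope $C_i$. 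Granting this, both the injectivity and surjectivity statements for $\cup\, e$ are the two halves of hard Lefschetz, and the degree bookkeeping above closes the argument. I would remark that the edge cases of the range ($k = d$ in the odd case, $k = d+1$ or $k = d+2$ in the even case) should be checked directly against the inequalities to confirm no off-by-one error, but these follow from the same Lefschetz isomorphisms.
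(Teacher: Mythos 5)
Your proposal is correct and is essentially the argument the paper relies on: the paper gives no proof of this proposition, citing \cite{Luo-Thesis} (whose very subject is the hard Lefschetz property) and remarking that the main tool is the Gysin sequence in the orbifold setting of \cite[Section 5]{PS} --- which is exactly your Gysin-plus-hard-Lefschetz scheme, with injectivity of $\cup\, e$ below the middle degree killing the odd groups and surjectivity above the middle degree killing the even ones. The ampleness of the Euler class that you flag as the main obstacle is indeed the standard fact in this situation ($Y_i$ is the link of the fixed point $\pi^{-1}(v_i)$ in the toric variety of the face $E_i$, i.e.\ the unit circle orbibundle of the ample line orbibundle determined by the polytope $C_i$, so $e$ is an ample class up to sign), and apart from a harmless indexing slip --- hard Lefschetz gives $\cup\, e^{\,d-2j}\colon H^{2j}(Z_i)\to H^{2(d-j)}(Z_i)$, not $\cup\, e^{\,d-j}$ --- your degree bookkeeping and the two vanishing ranges are right.
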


\begin{theorem}\label{thm_vanishing_odd_greater_than_half}
Let $P$ be a lattice polytope admitting a conic sequence as in \eqref{eq_conic_decomp} such that  $\mathscr{C}_{\{\mc{P}_i\}}$ consists of simple polytopes, and $X_P$  be the associated toric variety.
Then $H^{2k-1}(X)=0$ for all $2k-1> \frac{1}{2}\dim X_P$. 
\end{theorem}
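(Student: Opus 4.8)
The plan is to combine Theorem~\ref{thm_main} with the vanishing range supplied by Proposition~\ref{prop_cohom_S1-bdl}, but with a twist: a single application of Theorem~\ref{thm_main} is too blunt, since it would require $\widetilde H^{2k}(Y_i)=0$ for \emph{every} $i$ simultaneously, which need not hold. Instead I would run the same inductive argument along the filtration \eqref{eq_filtration_on_X} as in the proof of Theorem~\ref{thm_main}, but fix a single odd degree $2k-1$ with $2k-1>\tfrac12\dim X_P$ and check the hypothesis degree-by-degree. The key observation is that in the long exact sequence \eqref{eq_les_from_cofib}, the vanishing of $H^{2k-1}(X_i)$ follows once we know $H^{2k-1}(X_{i-1})=0$ together with $\widetilde H^{2k-2}(Y_i)=0$; so it suffices to verify $\widetilde H^{2k-2}(Y_i)=0$ for each $i$ in this \emph{one} fixed degree, rather than uniformly.

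First I would translate the hypothesis $2k-1>\tfrac12\dim X_P$ into a statement about the dimensions $\dim C_i$. Since $\dim X_P=2n$ where $n=\dim P$, the hypothesis reads $2k-1>n$, i.e. $2k-2\geq n$, so the relevant even degree satisfies $2k-2\geq n\geq \dim C_i$ for every $i$ (recall $\dim C_i\leq \dim P_{i-1}\leq n-1<n$ from the remarks following \eqref{eq_conic_decomp}). Then Proposition~\ref{prop_cohom_S1-bdl}, applied in the even degree $2k-2$, tells us that $H^{2k-2}(Y_i)=0$ precisely when $2k-2>\dim C_i$. The content of the argument is therefore to confirm that the strict inequality $2k-2>\dim C_i$ holds for all $i$ under the hypothesis $2k-1>n$; since $\dim C_i\leq n-1$ and $2k-2\geq n$, we indeed get $2k-2\geq n>n-1\geq \dim C_i$, giving the strict inequality needed for the even-degree vanishing in Proposition~\ref{prop_cohom_S1-bdl}.

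With $\widetilde H^{2k-2}(Y_i)=0$ secured for all $i$, I would then feed this into the cofiber long exact sequence \eqref{eq_les_from_cofib} and run the induction exactly as in Theorem~\ref{thm_main}: the base cases $X_1$ (a point) and $X_2\cong S^2$ have vanishing odd cohomology, and the inductive step uses $\widetilde H^{2k-2}(Y_i)=0$ and $H^{2k-1}(X_{i-1})=0$ to conclude $H^{2k-1}(X_i)=0$. Iterating up to $i=\ell$ yields $H^{2k-1}(X_P)=0$, as desired. In fact this shows the hypothesis of Theorem~\ref{thm_main} is satisfied in each such fixed degree $2k$, so the theorem could be invoked directly rather than re-deriving the exact sequence.

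The main obstacle, and the step deserving the most care, is the bookkeeping around the inequalities and the distinction between odd and even indexing. One must be careful that Proposition~\ref{prop_cohom_S1-bdl} gives even-degree vanishing only \emph{strictly above} $\dim C_i$, so the argument genuinely needs the strict gap $2k-2>\dim C_i$; this is exactly why the theorem is stated with the strict bound $2k-1>\tfrac12\dim X_P$ rather than $\geq$. A secondary subtlety is that Proposition~\ref{prop_cohom_S1-bdl} is an orbifold statement relying on Lemma~\ref{lem_Y_i_for_simple_poly}, so one should note explicitly that the hypothesis ``$\mathscr{C}_{\{\mc{P}_i\}}$ consists of simple polytopes'' is what licenses its use for every $Y_i$. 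Beyond these inequality checks the proof is routine, being a degree-restricted specialization of Theorem~\ref{thm_main}.
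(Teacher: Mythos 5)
Your argument is correct, but it is organized differently from the paper's. The paper does not invoke Theorem~\ref{thm_main} as a black box: it re-runs the induction along \eqref{eq_filtration_on_X} to prove a stronger, stratum-relative statement --- $H^j(X_i)=0$ for every odd $j>\tfrac12\dim X_i$, at every stage $i$ --- using the inequality $\dim C_i+1\le \tfrac12\dim X_i$ so that the vanishing range of Proposition~\ref{prop_cohom_S1-bdl} keeps pace with the growing dimension of the strata. You instead fix a single degree $2k-1>n=\tfrac12\dim X_P$ and note that the uniform bound $\dim C_i\le n-1$ makes the hypothesis of Theorem~\ref{thm_main} hold in degree $2k-2$ for every $i$ at once, so that theorem applies verbatim (your opening worry that this would be ``too blunt'' is unfounded: Theorem~\ref{thm_main} is already a per-degree statement, which is exactly the reading your final paragraph settles on). Your route is shorter and suffices for the theorem as stated; the paper's route buys the extra information that every intermediate stratum $X_i$, not just $X=X_\ell$, has vanishing odd cohomology above half its own dimension. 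One point to patch: the inequality $\dim P_{i-1}\le n-1$ is not actually among the remarks following \eqref{eq_conic_decomp} (those only give $\dim C_i\le\dim P_{i-1}$). It is nevertheless true, because the first deletion removes $P$ itself, so each $\mc{P}_i$ with $i<\ell$ consists of proper faces of $P$; alternatively and more directly, $\dim C_i=\dim E_i-1\le n-1$ since $C_i$ is a hyperplane cross-section of the face $E_i$ of $P$. With that justification supplied, your strict inequality $2k-2\ge n>n-1\ge\dim C_i$, and hence the whole proof, is sound.
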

\begin{proof}
Similarly to the proof of Theorem \ref{thm_main}, we prove the claim by the induction on the filtration in \eqref{eq_filtration_on_X}. Let $2d_i \colonequals \dim X_i$ for each $1\leq i \leq \ell$.  Then we have  $d_{i}\geq d_{i-1}$ and the condition for a conic sequence on $P$ implies that 
\begin{equation}\label{eq_dimYi}
    \dim Y_i\leq 2d_{i}-1.
\end{equation} 
We assume that $H^j(X_{i-1})=0$ for odd $j$ with $j > d_{i-1}$, and claim that $H^j(X_i)=0$ for odd $j$ with $j > d_i$. 

Consider the long exact sequence in \eqref{eq_les_from_cofib} induced from the cofibration in \eqref{eq_cofibration}. Since $\dim Y_i =2\dim C_i+1$, Proposition \ref{prop_cohom_S1-bdl} implies that $\widetilde{H}^{j-1}(Y_i)=0$ for all even $j-1$ with $j-1 > \frac{1}{2}(\dim Y_i-1)$. In particular, since 
\[
d_i \geq \dim C_i+1 = \frac{1}{2}(\dim Y_i+1 ),
\]
by \eqref{eq_dimYi}, we have $\widetilde{H}^{j-1}(Y_i)=0$ for all even $j-1$ with $j-1 \geq d_{i}$. Moreover, the induction hypothesis implies that $\widetilde H^j(X_{i-1})=0$ for all odd $j$ with $j > d_{i-1}$. 
Since $d_{i}\geq d_{i-1}$, we have $\widetilde H^j(X_{i-1})=0$ for all odd $j > d_{i}$. Therefore, $H^j(X_{i})=0$ if $j$ is odd with $j > \frac{1}{2}\dim X_i$, which completes the induction. 
\end{proof}

Especially, if a $3$-dimensional polytope $P$ admits a conic sequence, then $\mathscr{C}_{\{\mc{P}_i\}}$ consists of simple polytopes because every element in $\mathscr{C}_{\{\mc{P}_i\}}$ is of dimension at most~$2$. Hence Theorem \ref{thm_vanishing_odd_greater_than_half} implies that $H^5(X_P)=0$ in this case. We refer to  \cite[Proposition 3.5.3-(f)]{Jor}.

The next corollary specializes the $\Delta$-conic sequence of $P$, which is proved in \cite[Corollary 3.7]{SaSo2} as well. 
\begin{corollary}\label{cor_triangle_conic_implies_odd_vanish}
If $P$ is $\Delta$-conic, then $H^{\text{odd}}(X_P)=0$.
\end{corollary}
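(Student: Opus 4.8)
The plan is to reduce the statement to Theorem~\ref{thm_main} by verifying its hypothesis for \emph{every} $k$ at once. For a $\Delta$-conic sequence each $C_i$ is a simplex $\Delta^{d_i-1}$, so Lemma~\ref{lem_Y_i_simplex} identifies $Y_i$ with an orbifold lens space $S^{2d_i-1}/K_i$ for some finite group $K_i$. The crux is therefore to show that such a space carries no reduced rational cohomology in even degrees, i.e.\ $\widetilde H^{2k}(Y_i)=0$ for all $k\geq 0$ and all $i=2,\dots,\ell$.

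First I would compute the rational cohomology of $Y_i$. Since $K_i$ is finite, with rational coefficients the projection $S^{2d_i-1}\to S^{2d_i-1}/K_i$ together with the averaging (transfer) homomorphism gives an isomorphism $H^\ast(Y_i;\QQ)\cong H^\ast(S^{2d_i-1};\QQ)^{K_i}$. This is the standard fact that for a finite group action the rational cohomology of the quotient is the subspace of invariants, and it does not require the action to be free, which is important in the orbifold setting. Because the reduced rational cohomology of the odd sphere $S^{2d_i-1}$ is concentrated in the single \emph{odd} degree $2d_i-1$, passing to $K_i$-invariants can only retain a subspace in that one odd degree while leaving every other degree zero. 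In particular $\widetilde H^{2k}(Y_i)=0$ for every $k$, regardless of how $K_i$ acts on the top class (even when that action kills it).

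Then I would feed this into Theorem~\ref{thm_main}. For each fixed $k\geq 0$ the hypothesis $\widetilde H^{2k}(Y_i)=0$ for all $i$ is exactly what the previous step supplies, so the theorem yields $H^{2k+1}(X_P)=0$. Letting $k$ range over all nonnegative integers exhausts every odd degree, and hence $H^{\mathrm{odd}}(X_P)=0$. (The case $k=0$ is covered as well, since each $Y_i$ is connected, giving $\widetilde H^0(Y_i)=0$ and $H^1(X_P)=0$.)

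The only genuinely nontrivial input is the cohomology computation for $Y_i$, and the single point requiring care there is that we work with \emph{rational} coefficients: the transfer argument, and with it the concentration of $\widetilde H^\ast(Y_i)$ in one odd degree, can fail integrally or with torsion coefficients. Note that no dimension estimate (as in Theorem~\ref{thm_vanishing_odd_greater_than_half}) is needed here, because the vanishing of the even reduced cohomology of $Y_i$ holds in \emph{all} degrees simultaneously; this is precisely what makes the $\Delta$-conic case yield the full concentration in even degrees rather than merely vanishing above half the dimension.
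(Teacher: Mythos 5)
Your proposal is correct and follows essentially the same route as the paper: identify each $Y_i$ as an orbifold lens space $S^{2d_i-1}/K_i$ via Lemma~\ref{lem_Y_i_simplex}, use the transfer homomorphism to see that its reduced rational cohomology is concentrated in the single odd degree $2d_i-1$, and then apply Theorem~\ref{thm_main} for every $k$. Your extra care about the invariants possibly being a proper subspace of the top cohomology is harmless (the paper asserts the full computation in \eqref{eq_transfer}), since only the vanishing in even degrees is needed.
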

\begin{proof}

By Lemma \ref{lem_Y_i_simplex}, each $Y_i$ is homeomorphic to $S^{2\dim C_i+1}/K_i$ for some finite group $K_i$. Now the transfer homomorphism (see for instance \cite[III.2]{Bor}) implies that 
\begin{equation}\label{eq_transfer}
\widetilde{H}^j(S^{2\dim C_i+1}/K_i)=\begin{cases} \QQ & \text{ if }j=2\dim C_i+1,\\
0 & \text{ otherwise.}\end{cases}
\end{equation}
Hence the proof follows directly from Theorem \ref{thm_main}. 
\end{proof}

The converse of Corollary \ref{cor_triangle_conic_implies_odd_vanish} is not true in general. Indeed, G. Barthel, et. al. \cite[Example 3.5]{BBFK} introduced two examples of toric varieties $X$ and $X'$ over the polytope $P$ given in Example \ref{ex_triangular_prism} such that $H^3(X)=0$ and $H^3(X')=\QQ$. Note that $H^1(X)=H^1(X')=0$ because a projective toric variety is simply connected \cite[Theorem 12.1.10]{CLS} and $H^5(X)=H^5(X')=0$ by Theorem \ref{thm_vanishing_odd_greater_than_half}. 

Our last application of Theorem \ref{thm_main} is the Poincar\'e polynomial 
\[
\Poin(X_P, t)=\sum_{i=0}^{\dim X_P} \beta_{i}t^i
\]
of a toric variety $X_P$, where $\beta_{i}$ denotes the $i$-th cohomology Betti number. When $P$ is a simple lattice polytope, that is,  the resulting toric variety $X_P$ is a toric orbifold, it is well-known that $H^\ast(X_P)$ vanishes in odd degrees. One can also conclude this using Corollary \ref{cor_triangle_conic_implies_odd_vanish} because every simple polytope admits at least one $\Delta$-conic sequence. In this case, $(\beta_{0}, \dots, \beta_{2i}, \dots, \beta_{2n})$ agrees with the $h$-vector $h(P)=(h_0, \dots, h_i, \dots, h_n)$ of $P$, namely, the vector obtained by the equation given in \eqref{eq_fvec_hvec_rel}.
We refer to \cite{McM, Sta}. 

The next theorem allows us to enjoy Equation \eqref{eq_fvec_hvec_rel} in a wider class of polytopes including all simple polytopes, namely, the category of polytopes admitting $\Delta$-conic sequences.

\begin{theorem}\label{thm_poincare_poly}
Let $X_P$ be the toric variety associated with a lattice polytope with a $\Delta$-conic sequence. Then the Poincar\'e polynomial $\Poin(X_P, t)$ of $X_P$  is given by 
\begin{equation}\label{eq_poincare_poly_using_fvec}
\mathsf{Poin}(X_P, t)= \sum_{k=0}^n f_k (t^2-1)^k. %= \sum_{k=0}^n h_k x^k,
\end{equation} where $f_k$ is the number of $k$-dimensional faces of $P$.
\end{theorem}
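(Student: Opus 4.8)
The plan is to reduce \eqref{eq_poincare_poly_using_fvec} to a Betti-number count performed along the conic decomposition, and then to recognize the answer through \eqref{eq_fvec_hvec_rel}. Substituting $x = t^2 - 1$ into \eqref{eq_fvec_hvec_rel} turns $x+1$ into $t^2$, so the right-hand side of \eqref{eq_poincare_poly_using_fvec} becomes $\sum_{k=0}^n h_k t^{2k}$, where $h_0 = 1$ and $h_k$ is the number of $(k-1)$-simplices among the $C_i \in \mathscr{C}_{\{\mc{P}_i\}}$. Hence the theorem is equivalent to the two assertions $\beta_{2k} = h_k$ for all $k$ and $\beta_{\mathrm{odd}} = 0$. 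The odd vanishing is exactly Corollary \ref{cor_triangle_conic_implies_odd_vanish}; more precisely, the filtration induction underlying Theorem \ref{thm_main} gives $\widetilde{H}^{\mathrm{odd}}(X_i) = 0$ at \emph{every} stage $X_i$ of \eqref{eq_filtration_on_X}, and I will run the even count along the same filtration.

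The key input is the cohomology of the attaching spaces. By Lemma \ref{lem_Y_i_simplex} each $Y_i$ is homeomorphic to $S^{2\dim C_i + 1}/K_i$ for a finite group $K_i$, and the transfer computation \eqref{eq_transfer} shows that $\widetilde{H}^\ast(Y_i)$ equals $\QQ$ concentrated in the single \emph{odd} degree $2\dim C_i + 1$ and vanishes otherwise. I would then prove by induction on $i$ that $\dim \widetilde{H}^{2k}(X_i)$ equals the number of indices $j \le i$ with $\dim C_j = k-1$, with base case $X_1 = \mathrm{pt}$ (so $\beta_0 = 1 = h_0$) and the elementary check that $X_2 \cong S^2$.

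For the inductive step I analyze the long exact sequence \eqref{eq_les_from_cofib} of the cofibration $Y_i \to X_{i-1} \to X_i$. Because $\widetilde{H}^\ast(Y_i)$ is supported only in the odd degree $2\dim C_i + 1$, for every even degree $2k$ with $2k \neq 2\dim C_i + 2$ both neighboring terms $\widetilde{H}^{2k-1}(Y_i)$ and $\widetilde{H}^{2k}(Y_i)$ vanish, so \eqref{eq_les_from_cofib} yields an isomorphism $\widetilde{H}^{2k}(X_i) \cong \widetilde{H}^{2k}(X_{i-1})$. In the remaining case $2k = 2\dim C_i + 2$, that is $k = \dim C_i + 1$, the relevant portion of \eqref{eq_les_from_cofib} reads $\widetilde{H}^{2k-1}(X_{i-1}) \to \widetilde{H}^{2k-1}(Y_i) \to \widetilde{H}^{2k}(X_i) \to \widetilde{H}^{2k}(X_{i-1}) \to \widetilde{H}^{2k}(Y_i)$, in which the first and last terms vanish by odd vanishing and by the support of $\widetilde{H}^\ast(Y_i)$, respectively. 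This leaves the short exact sequence $0 \to \QQ \to \widetilde{H}^{2k}(X_i) \to \widetilde{H}^{2k}(X_{i-1}) \to 0$, which splits over the field $\QQ$. Thus passing from $X_{i-1}$ to $X_i$ raises $\beta_{2(\dim C_i + 1)}$ by exactly one and leaves every other Betti number unchanged. Summing over $i = 2, \dots, \ell$ gives $\beta_{2k} = \#\{\, i : \dim C_i = k-1 \,\} = h_k$, and combining with \eqref{eq_fvec_hvec_rel} produces \eqref{eq_poincare_poly_using_fvec}.

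The main obstacle, though here a mild one, is guaranteeing that the generator of $\widetilde{H}^{2\dim C_i + 1}(Y_i)$ contributes a genuinely new class in $X_i$ rather than being killed or identified with an existing class. This hinges precisely on the vanishing $\widetilde{H}^{2\dim C_i + 1}(X_{i-1}) = 0$, which is what forces the connecting homomorphism out of $\widetilde{H}^{2\dim C_i + 1}(Y_i)$ to be injective and the sequence above to split; without the odd-degree vanishing supplied by Corollary \ref{cor_triangle_conic_implies_odd_vanish} the bookkeeping could fail. The only other points requiring care are the dimension bookkeeping identifying $\dim C_i = k-1$ with a contribution in cohomological degree $2k$, and the base of the induction, both of which are routine.
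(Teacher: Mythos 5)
Your proposal is correct and follows essentially the same route as the paper's own proof: the combinatorial identity \eqref{eq_fvec_hvec_rel} (equivalently, Proposition \ref{prop_f_vec} with $1+x\Phi_{\Delta^{d-1}}(x)=(x+1)^{d}$), Lemma \ref{lem_Y_i_simplex} with the transfer computation \eqref{eq_transfer}, and an induction along the filtration \eqref{eq_filtration_on_X} using the long exact sequence \eqref{eq_les_from_cofib} to show each $Y_i$ contributes exactly one new class in degree $2(\dim C_i+1)$. Your write-up is in fact slightly more careful than the paper's, since it makes explicit the short exact sequence, its splitting over $\QQ$, and the role of the odd-degree vanishing of $\widetilde{H}^\ast(X_{i-1})$ in forcing the connecting map to behave, which the paper compresses into the sentence that the rank ``increases by $1$ whenever $\ast$ meets $2d_i$.''
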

\begin{proof}
Let $\mathscr{C}_{\{\mc{P}_i\}}=\{C_2, \dots, C_\ell\} = \{\Delta^{d_2-1}, \dots, \Delta^{d_\ell-1}\}$ be the sequence of simplices corresponding to the given $\Delta$-conic sequence $\{\mc{P}_i\}_{1\leq i \leq \ell}$ of $P$. Notice that $d_2=1$, $d_{\ell}=n$ and $1\leq d_i \leq  n-1$ for $i=2, \dots, \ell-1$. By Lemma \ref{lem_Y_i_simplex} together with \eqref{eq_transfer}, we have that $\widetilde{H}^{2d_i-1}(Y_i)$ is of rank $1$ and $\widetilde{H}^j(Y_i)$ vanishes for $j< 2d_i-1$. Therefore, the long exact sequence in \eqref{eq_les_from_cofib} implies that the rank of $\widetilde{H}^\ast(X_{i})$ increases by $1$ whenever $\ast$ meets $2d_i$. Repeating this process for each $i=2, \dots, \ell$, we conclude that 
\begin{equation}\label{eq_h_k}
\beta_{2j}=\# \{i \mid d_i=j\}.
\end{equation}

Since $C_i=\Delta^{d_i-1}$ for some $d_i\geq 1$, we have 
$\Phi_{C_i}(x)=\sum_{k=1}^{d_i} { d_i \choose  k} x^{k-1}$. Thus we get 
\begin{equation}\label{eq_fvec_cone}
1+x\Phi_{C_i}(x)=(x+1)^{d_i}.
\end{equation}
Therefore, the result of Proposition \ref{prop_f_vec} together with \eqref{eq_h_k} and \eqref{eq_fvec_cone} implies that 
\begin{equation}\label{eq_fvec_bettinum}
\sum_{k=0}^n f_k(P) x^k = \sum_{i=2}^{\ell} (x+1)^{d_i} = \sum_{k=0}^n \beta_{2k}(x+1)^k.
\end{equation}
Since $H^\ast(X_P)$ vanishes in odd degrees, the result follows by plugging $t^2-1$ into $x$ in \eqref{eq_fvec_bettinum}. 
\end{proof}

\begin{figure}
\tdplotsetmaincoords{80}{40}
\begin{tikzpicture}[tdplot_main_coords, scale=0.5, yscale=0.9]

\draw[-stealth, gray] (0,0,0) -- (3,0,0) node[anchor=north west]{\tiny$x_1$};
\draw[-stealth, gray] (0,0,0) -- (0,3,0) node[anchor=south]{\tiny$x_2$};
\draw[-stealth, gray] (0,0,0) -- (0,0,2.5) node[anchor=south]{\tiny$x_3$};
\node[fill=red, red, circle, inner sep=.9pt] at (0,1,0) {};

\begin{scope}[xshift=100]
\draw[-stealth, gray] (0,0,0) -- (3,0,0);% node[anchor=north west]{$x_1$};
\draw[-stealth, gray] (0,0,0) -- (0,3,0);% node[anchor=north west]{$x_2$};
\draw[-stealth, gray] (0,0,0) -- (0,0,2.5);% node[anchor=south]{$x_3$};
\draw[red, thick] (0,1,0)--(0,1,1);
\node[fill=red, red, circle, inner sep=.9pt] at (0,1,1) {};
\node[fill, circle, inner sep=.7pt] at (0,1,0) {};
\end{scope}

\begin{scope}[xshift=200]
\draw[-stealth, gray] (0,0,0) -- (3,0,0);% node[anchor=north west]{$x_1$};
\draw[-stealth, gray] (0,0,0) -- (0,3,0);% node[anchor=north west]{$x_2$};
\draw[-stealth, gray] (0,0,0) -- (0,0,2.5);% node[anchor=south]{$x_3$};

\draw (0,1,0)--(0,1,1);
\draw[red, thick] (0,2,0)--(0,1,0);
\node[fill=red, red, circle, inner sep=.9pt] at (0,2,0) {};
\node[fill, circle, inner sep=.7pt] at (0,1,1) {};
\node[fill, circle, inner sep=.7pt] at (0,1,0) {};
\end{scope}

\begin{scope}[xshift=300]
\draw[-stealth, gray] (0,0,0) -- (3,0,0);% node[anchor=north west]{$x_1$};
\draw[-stealth, gray] (0,0,0) -- (0,3,0);% node[anchor=north west]{$x_2$};
\draw[-stealth, gray] (0,0,0) -- (0,0,2.5);% node[anchor=south]{$x_3$};

\draw[fill=red!20] (0,2,2)--(0,1,1)--(0,1,0)--(0,2,0)--cycle;
\node[fill=red, red, circle, inner sep=.9pt] at (0,2,2) {};
\draw[red, thick] (0,2,2)--(0,1,1);
\draw[red, thick] (0,2,2)--(0,2,0);

\draw (0,1,0)--(0,1,1);
\draw (0,2,0)--(0,1,0);
\node[fill, circle, inner sep=.7pt] at (0,2,0) {};
\node[fill, circle, inner sep=.7pt] at (0,1,1) {};
\node[fill, circle, inner sep=.7pt] at (0,1,0) {};
\end{scope}

\begin{scope}[xshift=400]
\draw[-stealth, gray] (0,0,0) -- (3,0,0);% node[anchor=north west]{$x_1$};
\draw[-stealth, gray] (0,0,0) -- (0,3,0);% node[anchor=north west]{$x_2$};
\draw[-stealth, gray] (0,0,0) -- (0,0,2.5);% node[anchor=south]{$x_3$};

\draw[fill=blue!20] (0,2,2)--(0,1,1)--(0,1,0)--(0,2,0)--cycle;
\node[fill, circle, inner sep=.7pt] at (0,2,2) {};
\draw (0,2,2)--(0,1,1);
\draw (0,2,2)--(0,2,0);
\draw (0,1,0)--(0,1,1);
\draw (0,2,0)--(0,1,0);

\draw[fill=pink, opacity=0.8] (1,1,1)--(0,1,1)--(0,1,0)--cycle;
\node[fill=red, red, circle, inner sep=.9pt] at (1,1,1) {};
\draw[red, thick] (1,1,1)--(0,1,1);
\draw[red, thick] (1,1,1)--(0,1,0);

\node[fill, circle, inner sep=.7pt] at (0,2,0) {};
\node[fill, circle, inner sep=.7pt] at (0,1,1) {};
\node[fill, circle, inner sep=.7pt] at (0,1,0) {};
\end{scope}

\begin{scope}[xshift=500]
\draw[-stealth, gray] (0,0,0) -- (3,0,0);% node[anchor=north west]{$x_1$};
\draw[-stealth, gray] (0,0,0) -- (0,3,0);% node[anchor=north west]{$x_2$};
\draw[-stealth, gray] (0,0,0) -- (0,0,2.5);% node[anchor=south]{$x_3$};

\draw[fill=blue!20] (0,2,2)--(0,1,1)--(0,1,0)--(0,2,0)--cycle;
\node[fill, circle, inner sep=.7pt] at (0,2,2) {};
\draw (0,2,2)--(0,1,1);
\draw (0,2,2)--(0,2,0);
\draw (0,1,0)--(0,1,1);
\draw (0,2,0)--(0,1,0);

\draw[fill=blue!40, opacity=0.8] (1,1,1)--(0,1,1)--(0,1,0)--cycle;
\draw (1,1,1)--(0,1,1);
\draw (1,1,1)--(0,1,0);

\draw[fill=pink, opacity=0.8] (1,2,1)--(1,1,1)--(0,1,0)--(0,2,0)--cycle;
\node[fill=red, red, circle, inner sep=.9pt] at (1,2,1) {};
\draw[red, thick] (1,2,1)--(1,1,1);
\draw[red, thick] (1,2,1)--(0,2,0);

\node[fill, circle, inner sep=.7pt] at (0,2,0) {};
\node[fill, circle, inner sep=.7pt] at (0,1,1) {};
\node[fill, circle, inner sep=.7pt] at (0,1,0) {};
\node[fill, circle, inner sep=.7pt] at (1,1,1) {};
\end{scope}

\begin{scope}[xshift=600]
\draw[-stealth, gray] (0,0,0) -- (3,0,0);% node[anchor=north west]{$x_1$};
\draw[-stealth, gray] (0,0,0) -- (0,3,0);% node[anchor=north west]{$x_2$};
\draw[-stealth, gray] (0,0,0) -- (0,0,2.5);% node[anchor=south]{$x_3$};

\draw[fill=blue!20] (0,2,2)--(0,1,1)--(0,1,0)--(0,2,0)--cycle;
\draw (0,2,2)--(0,1,1);
\draw (0,2,2)--(0,2,0);
\draw (0,1,0)--(0,1,1);
\draw (0,2,0)--(0,1,0);

\draw[fill=pink, opacity=0.8] (1,2,2)--(0,2,2)--(0,1,1)--(1,1,1)--(1,2,1)--cycle;
\node[fill=red, red, circle, inner sep=.9pt] at (1,2,2) {};
\draw[red, thick] (1,2,2)--(0,2,2);
\draw[red, thick] (1,2,2)--(1,1,1);
\draw[red, thick] (1,2,2)--(1,2,1);

\draw[fill=blue!40, opacity=0.8] (1,1,1)--(0,1,1)--(0,1,0)--cycle;
\draw (1,1,1)--(0,1,1);
\draw (1,1,1)--(0,1,0);

\draw[fill=blue!40, opacity=0.8] (1,2,1)--(1,1,1)--(0,1,0)--(0,2,0)--cycle;
\draw (1,2,1)--(1,1,1);
\draw (1,2,1)--(0,2,0);

\node[fill, circle, inner sep=.7pt] at (0,2,2) {};
\node[fill, circle, inner sep=.7pt] at (0,2,0) {};
\node[fill, circle, inner sep=.7pt] at (0,1,1) {};
\node[fill, circle, inner sep=.7pt] at (0,1,0) {};
\node[fill, circle, inner sep=.7pt] at (1,1,1) {};
\node[fill, circle, inner sep=.7pt] at (1,2,1) {};
\end{scope}

%%%%	Second Conic of GZ		%%%%%%%%%%%%%%

\begin{scope}[yshift=-120]

\draw[-stealth, gray] (0,0,0) -- (3,0,0) node[anchor=north west]{\tiny$x_1$};
\draw[-stealth, gray] (0,0,0) -- (0,3,0) node[anchor=south]{\tiny$x_2$};
\draw[-stealth, gray] (0,0,0) -- (0,0,2.5) node[anchor=south]{\tiny$x_3$};
\node[fill=red, red, circle, inner sep=.9pt] at (0,1,0) {};

\begin{scope}[xshift=100]
\draw[-stealth, gray] (0,0,0) -- (3,0,0);% node[anchor=north west]{$x_1$};
\draw[-stealth, gray] (0,0,0) -- (0,3,0);% node[anchor=north west]{$x_2$};
\draw[-stealth, gray] (0,0,0) -- (0,0,2.5);% node[anchor=south]{$x_3$};
\draw[red, thick] (0,1,0)--(0,1,1);
\node[fill=red, red, circle, inner sep=.9pt] at (0,1,1) {};
\node[fill, circle, inner sep=.7pt] at (0,1,0) {};
\end{scope}

\begin{scope}[xshift=200]
\draw[-stealth, gray] (0,0,0) -- (3,0,0);% node[anchor=north west]{$x_1$};
\draw[-stealth, gray] (0,0,0) -- (0,3,0);% node[anchor=north west]{$x_2$};
\draw[-stealth, gray] (0,0,0) -- (0,0,2.5);% node[anchor=south]{$x_3$};

\draw (0,1,0)--(0,1,1);
\draw[red, thick] (0,2,0)--(0,1,0);
\node[fill=red, red, circle, inner sep=.9pt] at (0,2,0) {};
\node[fill, circle, inner sep=.7pt] at (0,1,1) {};
\node[fill, circle, inner sep=.7pt] at (0,1,0) {};
\end{scope}

\begin{scope}[xshift=300]
\draw[-stealth, gray] (0,0,0) -- (3,0,0);% node[anchor=north west]{$x_1$};
\draw[-stealth, gray] (0,0,0) -- (0,3,0);% node[anchor=north west]{$x_2$};
\draw[-stealth, gray] (0,0,0) -- (0,0,2.5);% node[anchor=south]{$x_3$};

\draw[fill=red!20] (0,2,2)--(0,1,1)--(0,1,0)--(0,2,0)--cycle;
\node[fill=red, red, circle, inner sep=.9pt] at (0,2,2) {};
\draw[red, thick] (0,2,2)--(0,1,1);
\draw[red, thick] (0,2,2)--(0,2,0);

\draw (0,1,0)--(0,1,1);
\draw (0,2,0)--(0,1,0);
\node[fill, circle, inner sep=.7pt] at (0,2,0) {};
\node[fill, circle, inner sep=.7pt] at (0,1,1) {};
\node[fill, circle, inner sep=.7pt] at (0,1,0) {};
\end{scope}

\begin{scope}[xshift=400]
\draw[-stealth, gray] (0,0,0) -- (3,0,0);% node[anchor=north west]{$x_1$};
\draw[-stealth, gray] (0,0,0) -- (0,3,0);% node[anchor=north west]{$x_2$};
\draw[-stealth, gray] (0,0,0) -- (0,0,2.5);% node[anchor=south]{$x_3$};

\draw[fill=blue!20] (0,2,2)--(0,1,1)--(0,1,0)--(0,2,0)--cycle;
\node[fill, circle, inner sep=.7pt] at (0,2,2) {};
\draw (0,2,2)--(0,1,1);
\draw (0,2,2)--(0,2,0);
\draw (0,1,0)--(0,1,1);
\draw (0,2,0)--(0,1,0);

\node[fill=red, red, circle, inner sep=.9pt] at (1,2,1) {};
\draw[red, thick] (1,2,1)--(0,2,0);

\node[fill, circle, inner sep=.7pt] at (0,2,0) {};
\node[fill, circle, inner sep=.7pt] at (0,1,1) {};
\node[fill, circle, inner sep=.7pt] at (0,1,0) {};
\end{scope}

\begin{scope}[xshift=500]
\draw[-stealth, gray] (0,0,0) -- (3,0,0);% node[anchor=north west]{$x_1$};
\draw[-stealth, gray] (0,0,0) -- (0,3,0);% node[anchor=north west]{$x_2$};
\draw[-stealth, gray] (0,0,0) -- (0,0,2.5);% node[anchor=south]{$x_3$};

\draw[fill=blue!20] (0,2,2)--(0,1,1)--(0,1,0)--(0,2,0)--cycle;
\draw (0,2,2)--(0,1,1);
\draw (0,2,2)--(0,2,0);
\draw (0,1,0)--(0,1,1);
\draw (0,2,0)--(0,1,0);
\draw (1,2,1)--(0,2,0);

\draw[fill=red!20] (1,2,2)--(0,2,2)--(0,2,0)--(1,2,1)--cycle;
\node[fill=red, red, circle, inner sep=.9pt] at (1,2,2) {};
\draw[red, thick] (1,2,2)--(0,2,2);
\draw[red, thick] (1,2,2)--(1,2,1);

\node[fill, circle, inner sep=.7pt] at (0,2,2) {};
\node[fill, circle, inner sep=.7pt] at (1,2,1) {};
\node[fill, circle, inner sep=.7pt] at (0,2,0) {};
\node[fill, circle, inner sep=.7pt] at (0,1,1) {};
\node[fill, circle, inner sep=.7pt] at (0,1,0) {};
\end{scope}

\begin{scope}[xshift=600]
\draw[-stealth, gray] (0,0,0) -- (3,0,0);% node[anchor=north west]{$x_1$};
\draw[-stealth, gray] (0,0,0) -- (0,3,0);% node[anchor=north west]{$x_2$};
\draw[-stealth, gray] (0,0,0) -- (0,0,2.5);% node[anchor=south]{$x_3$};

\draw[fill=blue!20] (0,2,2)--(0,1,1)--(0,1,0)--(0,2,0)--cycle;
\draw (0,2,2)--(0,1,1);
\draw (0,2,2)--(0,2,0);
\draw (0,1,0)--(0,1,1);
\draw (0,2,0)--(0,1,0);
\draw (1,2,1)--(0,2,0);
\draw (1,2,2)--(0,2,2);
\draw (1,2,2)--(1,2,1);

\draw[fill=blue!20] (1,2,2)--(0,2,2)--(0,2,0)--(1,2,1)--cycle;

\draw[fill=pink, opacity=0.8] (1,2,2)--(0,2,2)--(0,1,1)--(0,1,0)--(0,2,0)--(1,2,1)--cycle;
\node[fill=red, red, circle, inner sep=.9pt] at (1,1,1) {};
\draw[red, thick] (1,1,1)--(1,2,2);
\draw[red, thick] (1,1,1)--(1,2,1);
\draw[red, thick] (1,1,1)--(0,1,0);
\draw[red, thick] (1,1,1)--(0,1,1);

\node[fill, circle, inner sep=.7pt] at (1,2,2) {};
\node[fill, circle, inner sep=.7pt] at (0,2,2) {};
\node[fill, circle, inner sep=.7pt] at (1,2,1) {};
\node[fill, circle, inner sep=.7pt] at (0,2,0) {};
\node[fill, circle, inner sep=.7pt] at (0,1,1) {};
\node[fill, circle, inner sep=.7pt] at (0,1,0) {};
\end{scope}
\end{scope}

\end{tikzpicture}
\caption{Two conic sequences of $GZ_3$.}
\label{fig_conic_GZ}
\end{figure}
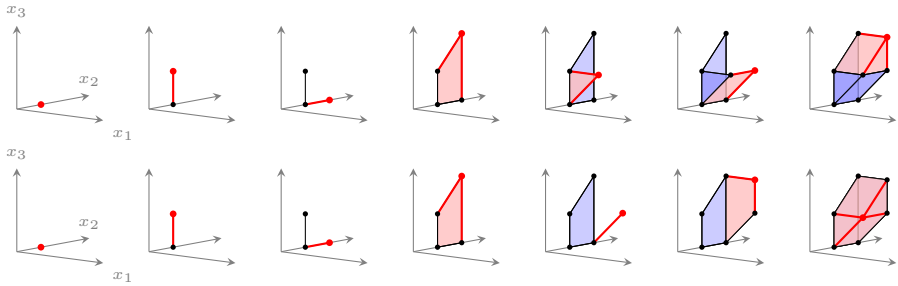

\begin{example}
Let $GZ_3$ be the three-dimensional Gelfand--Zetlin polytope, namely the set of points $(x_1, x_2, x_3)$ in $\RR^3$ satisfying the following inequalities:
\[
\arraycolsep=1.5pt
\begin{array}{ccccccccc}
0&&&&1&&&&2 \\
&\leqmrot&&\leqprot&&\leqmrot&&\leqprot&\\
&&x_1&&&&x_2&&\\
&&&\leqmrot&&\leqprot&&&\\
&&&&x_3&&&&
\end{array}
\]
We describe two conic sequences of $GZ_3$ in Figure \ref{fig_conic_GZ}, where the first one is a $\Delta$-conic sequence. Since $f(GZ_3)=(7, 11, 6, 1)$, Theorem \ref{thm_poincare_poly} implies that the Poincar\'e polynomial of the toric variety $X_{GZ_3}$ is 
\[
\Poin (X_{GZ_3};t)=7+11(t^2-1)+6(t^2-1)^2+(t^2-1)^3=t^3+3t^4+2t^2+1.
\]
\end{example}

\end{document}